\newcommand{\norm}[1]{\left\lVert#1\right\rVert}
\theoremstyle{plain}
\newtheorem{theorem}{Theorem}[section]
\newtheorem{proposition}[theorem]{Proposition}
\theoremstyle{plain}
\newtheorem{definition}[theorem]{Definition}
\newtheorem{remark}[theorem]{Remark}
\def\R{{\rm I\kern-2ptR}}
\def\N{{\rm I\kern-2ptN}}
\def\emptyset{\mathop{\raisebox{.1ex}{$\not
\mathrel{\raisebox{.1ex}{$\scriptstyle\bigcirc$}}$}}}
\def\qed{\hskip .6em \raise1.8pt\hbox{\vrule height4pt
width6pt depth2pt}}
\def\qedd{\hskip .4em \raise1.8pt\hbox{\vrule height3pt
width5pt depth1.8pt}}
\def\sq{\hskip .6em \raise1.8pt\hbox{\vrule
height4pt width6pt depth2pt}}
\def\leaderfill{\leaders\hbox to 1em{\hss.\hss}\hfill}
\begin{document}

\title[]{Sequence-Singular Operators}

\author{Gleb Sirotkin}
\address{Department of Mathematical Sciences, Northern Illinois University, DeKalb, IL 60115}
\email{gsirotkin@niu.edu}

\author{Ben Wallis}
\address{Department of Mathematical Sciences, Northern Illinois University, DeKalb, IL 60115}
\email{bwallis@niu.edu}

\thanks{Mathematics Subject Classification: 46B06, 46B25, 46B45, 47L10, 47L20\\\indent Keywords:  functional analysis, Banach spaces, operator ideals}

\begin{abstract} In this paper we study two types of collections of operators on a Banach space on the subject of forming operator ideals. One of the types allows us to construct an uncountable chain of closed ideals in each of the operator algebras $\mathcal{L}(\ell_1\oplus\ell_q)$, $1<q<\infty$, and $\mathcal{L}(\ell_1\oplus c_0)$.  This finishes answering a longstanding question of Pietsch.\end{abstract}

\maketitle
\theoremstyle{plain}
%%%%%%%%%%%%%%%%%%%%%%%%%%%%%%%%%%%%%%%%%%%%%%Introduction
\section{Introduction}

Fix a seminormalized basis $e=(e_n)$ for a Banach space $E$.  Following Beanland and Freeman \cite{BF11}, we say that an operator $T\in\mathcal{L}(X,Y)$, $X$ and $Y$ Banach spaces, is {\bf $\boldsymbol{(e_n)}$-singular} just in case for every normalized basic sequence $(x_n)$ in $X$, the image sequence $(Tx_n)$ fails to dominate $(e_n)$.  We denote by $\mathcal{WS}_{e,\omega_1}(X,Y)$ the class of all $(e_n)$-singular operators in $\mathcal{L}(X,Y)$.  In \cite[Proposition 2.8]{BF11} the following interesting results were proved about class $\mathcal{WS}_{e,\omega_1}$ for certain nice choices of $e$.
\begin{itemize}\item  If $e=(e_n)$ denotes the canonical basis for $c_0$ then $\mathcal{WS}_{e,\omega_1}=\mathcal{K}$, the compact operators.
\item  If $e=(e_n)$ denotes the summing basis for $c_0$ then $\mathcal{WS}_{e,\omega_1}=\mathcal{W}$, the weakly compact operators.
\item  If $e=(e_n)$ denotes the canonical basis for $\ell_1$ then $\mathcal{WS}_{e,\omega_1}=\mathcal{R}$, the Rosenthal operators.\end{itemize}
\noindent (Recall that an operator $T\in\mathcal{L}(X,Y)$ is {\bf Rosenthal} just in case for every bounded sequence $(x_n)$ in $X$, $(Tx_n)$ admits a weak Cauchy subsequence.)  Each of these classes is a norm-closed operator ideal, and so it is natural to conjecture that class $\mathcal{WS}_{e,\omega_1}$ could also form an operator ideal for other nice choices of $e=(e_n)$.  In particular, we might expect $\mathcal{WS}_{e,\omega_1}$ to be an operator ideal whenever $e=(e_n)$ is the canonical basis of $\ell_p$, $1<p<\infty$.  

In the present paper, we show that the above conjecture is false, and indeed that for any $1<p<\infty$ we can always choose spaces $X$ and $Y$ such that $\mathcal{WS}_{e,\omega_1}(X,Y)$ fails to be closed under addition when $e=(e_n)$ is the canonical basis for $\ell_p$, $1<p<\infty$.

Despite this, we might still be able to use them, or variants thereof, to investigate the closed ideal structure of the operator algebra $\mathcal{L}(X)$ for certain choices of $X$.  Indeed, using a descriptive set-theoretic result from \cite{BF11} together with a modification of the definition of $\mathcal{WS}_{e,\xi}$ to show that $\mathcal{L}(\ell_1\oplus\ell_q)$, $1<q<\infty$, and $\mathcal{L}(\ell_1\oplus c_0)$ each admit an uncountable chain of closed ideals.  This is especially significant since it represents the last ingredient needed to answer a longstanding open question of Pietsch (\cite[Problem 5.33]{Pi78}).

For the most part, all definitions and notation are standard, as are found, for instance, in \cite{AK06}.  However, we will restate some of the most important such here.  Let $\mathcal{J}$ be a subclass of the class $\mathcal{L}$ of all continuous linear operators between Banach spaces, and if $X$ and $Y$ are Banach spaces then we write $\mathcal{J}(X,Y)=\mathcal{L}(X,Y)\cap\mathcal{J}$, a component.  We say that $\mathcal{J}$ has the {\bf ideal property} whenever $BTA\in\mathcal{J}(W,Z)$ for all $A\in\mathcal{L}(W,X)$, $B\in\mathcal{L}(Y,Z)$, and $T\in\mathcal{J}(X,Y)$, and all Banach spaces $W$, $X$, $Y$, and $Z$.  If in addition every component $\mathcal{J}(X,Y)$ is a linear subspace of $\mathcal{L}(X,Y)$ containing all the finite-rank operators therein, then $\mathcal{J}$ is an \textbf{operator ideal}.  We say that $\mathcal{J}$ is norm-closed (closed under addition) whenever all its components $\mathcal{J}(X,Y)$ are norm-closed (closed under addition) in $\mathcal{L}(X,Y)$.  Let us also borrow a piece of terminology from \cite{Sc12}:  If $X$ and $Y$ are Banach spaces, then a linear subspace $\mathcal{J}$ of $\mathcal{L}(X,Y)$ is called a {\bf subideal} just in case whenever $A\in\mathcal{L}(X)$, $B\in\mathcal{L}(Y)$, and $T\in\mathcal{J}$, we have $BTA\in\mathcal{J}$.  A subideal of $\mathcal{L}(X)$ is called, simply, an {\bf ideal}.  (For operator algebras, this coincides with the notion of an {\it ideal} in the algebraic sense.)

If $M$ is an infinite subset of $\mathbb{N}$, then denote by $[M]$ the family of all infinite subsets of $M$, and denote by $[M]^{<\omega}$ the family of all finite subsets of $M$.  For $n\in\mathbb{N}$ let $[M]^{\leq n}=\{A\in[M]^{<\omega}:\#A\leq n\}$, i.e. the family of all subsets of $M$ of size $\leq n$.  If $\mathcal{F}$ is a subset of $[\mathbb{N}]^{<\omega}$ and $M=(m_i)\in[\mathbb{N}]$, then we define
\[\mathcal{F}(M)=\{(m_i)_{i\in E}:E\in\mathcal{F}\}.\]
If $\mathcal{F}$ and $\mathcal{G}$ are both subsets of $[\mathbb{N}]^{<\omega}$ then we define
\[\mathcal{F}[\mathcal{G}]=\left\{\bigcup_{i=1}^nE_i:E_1<\cdots<E_n,E_i\in\mathcal{G}\forall i,(\min E_i)_{i=1}^n\in\mathcal{F}\right\}.\]

Let us now define the \textbf{Schreier families}.  These are denoted $\mathcal{S}_\xi$ for each countable ordinal $0\leq\xi<\omega_1$, and we must define them as follows.  For $\xi = 0$, we put $\mathcal{S}_0:=\{\{n\}:n\in\mathbb{N}\}\cup\{\emptyset\}$.  For the case $\xi=\zeta+1$ for a countable ordinal $1\leq\zeta<\omega_1$, we define $\mathcal{S}_\xi$ as the set containing $\emptyset$ together with all $F\subset\mathbb{N}$
such that there exist $n\in\mathbb{N}$ and a decomposition $F=\bigcup_{k=1}^nF_k$ with sets $F_k$ from $\mathcal{S}_\zeta$ satisfying $n\le F_1<\cdots<F_n$. In case $\xi$ is a limit ordinal we fix a strictly increasing sequence $(\zeta_n)$ of non-limit-ordinals satisfying $\sup_n\zeta_n=\xi$, and define $\mathcal{S}_\xi:=\bigcup_{n=1}^\infty\{F\in\mathcal{S}_{\zeta_n}:n\leq F\}$.

For convenience, in some contexts we will write $\mathcal{S}_{\omega_1}$ for the family of all finite subsets of $\mathbb{N}$.  In other words, we have $\mathcal{S}_{\omega_1}$ being identical to $[\mathbb{N}]^{<\omega}$, and which notation we use will depend on the context.  This is, admittedly, somewhat of an abuse of notation, as there is no such thing as the ``$\omega_1$st Schreier family,'' but it will greatly simplify the writing.

It is well-known that the Schreier families (including $\mathcal{S}_{\omega_1}$) have the {\bf spreading property}, which is to say that if $\{n_1<\cdots<n_k\}\in\mathcal{S}_\xi$ and $\{m_1<\cdots<m_k\}\subseteq\mathbb{N}$ satisfies $n_j\leq m_j$ for all $j=1,\cdots,k$, then $\{m_1<\cdots<m_k\}\in\mathcal{S}_\xi$.  Please note that, in general, it is not true that $\mathcal{S}_\xi\subseteq\mathcal{S}_\zeta$ for all $1\leq\xi\leq\zeta\leq\omega_1$.  However, we do always have $\mathcal{S}_1\subseteq\mathcal{S}_\xi$, $1\leq\xi\leq\omega_1$.  Furthermore, given any $1\leq\xi\leq\zeta\leq\omega_1$ there exist $d=d(\xi,\zeta)$ such that if $S\in\mathcal{S}_\xi$ with $d\leq\min S$ then $S\in\mathcal{S}_\zeta$.  (That $\mathcal{S}_1\subseteq\mathcal{S}_\xi$ is obvious, and the remaining facts can all be found, for instance, in \cite[III.2,p1051]{AGR03}.)

We will also need a pair of technical results regarding the Schreier families.

\begin{proposition}[{\cite[Lemma 1.3]{Po09}}]\label{2x}  Let $1\leq\xi\leq\omega_1$ and $A\in\mathcal{S}_\xi$.  Then
\[\{2n,2n+2:n\in A\}\in\mathcal{S}_\xi.\]\end{proposition}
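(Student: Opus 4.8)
The plan is to argue by transfinite induction on $\xi$, over $1\le\xi<\omega_1$, treating $\omega_1$ separately. It is convenient first to rewrite the target set: writing $2A=\{2n:n\in A\}$ and $2A+2=\{2n+2:n\in A\}$, we have
\[\{2n,2n+2:n\in A\}=2A\cup(2A+2),\]
a set whose minimum is $2\min A$ and whose cardinality is at most $2\,\#A$. Two cases are immediate. When $\xi=\omega_1$ the set is finite and there is nothing to prove. When $\xi=1$, membership $A\in\mathcal{S}_1$ means $\#A\le\min A$, so $\#\bigl(2A\cup(2A+2)\bigr)\le 2\,\#A\le 2\min A=\min\bigl(2A\cup(2A+2)\bigr)$, which is exactly the condition for membership in $\mathcal{S}_1$. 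It therefore remains to treat the successor and limit steps for $1\le\xi<\omega_1$.

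For a limit ordinal $\xi$ with defining sequence $(\zeta_n)$, assume the result for each $\zeta_n$. If $A\in\mathcal{S}_\xi$ then $A\in\mathcal{S}_{\zeta_n}$ for some $n$ with $n\le\min A$; the degenerate case $\zeta_n=0$ (so $A$ is a singleton) is handled as in the base case, and otherwise the inductive hypothesis gives $2A\cup(2A+2)\in\mathcal{S}_{\zeta_n}$. Since the minimum of this set is $2\min A\ge n$, it lies in $\{F\in\mathcal{S}_{\zeta_n}:n\le F\}\subseteq\mathcal{S}_\xi$, as required.

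The successor step $\xi=\zeta+1$ (with $\zeta\ge1$) is where the real work lies. Decompose $A=\bigcup_{k=1}^nA_k$ with $A_k\in\mathcal{S}_\zeta$ and $n\le A_1<\cdots<A_n$, and set $B_k=\{2m,2m+2:m\in A_k\}$. By the inductive hypothesis each $B_k$ belongs to $\mathcal{S}_\zeta$, their union is the target set, and $\min B_1=2\min A_1\ge 2n\ge n$ bounds the number of blocks. The only axiom that can fail is strict successivity of the blocks: precisely when $\min A_{k+1}=\max A_k+1$, the largest element $2\max A_k+2$ of $B_k$ equals the smallest element $2\min A_{k+1}$ of $B_{k+1}$, so consecutive blocks share a point. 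This overlap is the main obstacle.

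I would remove it using the hereditary (subset-closure) property of the Schreier families, which is standard (see \cite{AGR03}). Whenever the overlap occurs, delete the shared point from $B_{k+1}$, that is, replace $B_{k+1}$ by $B_{k+1}\setminus\{2\min A_{k+1}\}$. Because every nonempty $B_k$ contains at least the two points $2m$ and $2m+2$ for any $m\in A_k$, deleting a minimum never deletes a maximum, so the union of the blocks is unchanged and, by heredity, each trimmed block still lies in $\mathcal{S}_\zeta$. After trimming, the maxima $\max B_k=2\max A_k+2$ are unchanged while the new minimum of $B_{k+1}$ is at least $2\min A_{k+1}+2>\max B_k$ (in the non-overlapping case one already has $2\min A_{k+1}\ge 2\max A_k+4$); hence the blocks are strictly successive. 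Together with $n\le\min B_1$ this exhibits the target set as an element of $\mathcal{S}_{\zeta+1}$ and closes the induction.
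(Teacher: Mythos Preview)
Your transfinite-induction argument is correct. The paper does not prove this proposition; it merely quotes it as \cite[Lemma 1.3]{Po09}, so there is no in-paper proof to compare against, and your write-up supplies exactly the kind of argument one would expect in the cited source.

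A couple of minor remarks on presentation (none affecting correctness). First, your limit-ordinal case implicitly uses $\mathcal{S}_1\subseteq\mathcal{S}_\xi$ to dispose of the degenerate situation $\zeta_n=0$; this inclusion is recorded in the paper, so you may want to cite it explicitly rather than say ``as in the base case.'' Second, in the successor step, the trimming procedure should be understood as sequential in $k$: since removing the minimum of $B_{k+1}$ leaves $\max B_{k+1}$ untouched, the overlap test between $B_{k+1}$ and $B_{k+2}$ is unaffected by earlier trims, and each $B_{k+1}$ loses at most one point. It would be worth saying this in one sentence, since otherwise a reader might worry about cascading deletions when several consecutive $A_k$ are singletons with $\min A_{k+1}=\max A_k+1$.
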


\begin{proposition}[{\cite[Proposition 3.2(b)]{OTW97}}]\label{1.iii}  If $1\leq\xi,\zeta<\omega_1$ then there is $L\in[\mathbb{N}]$ such that $\mathcal{S}_\xi[\mathcal{S}_\zeta](L)\subseteq\mathcal{S}_{\zeta+\xi}$.\end{proposition}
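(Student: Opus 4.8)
The plan is to argue by transfinite induction on $\xi$, holding $\zeta$ fixed, and proving at each stage the existence of an infinite $L$ with $\mathcal{S}_\xi[\mathcal{S}_\zeta](L)\subseteq\mathcal{S}_{\zeta+\xi}$; this is the natural variable to induct on, since $\zeta+\xi$ is increasing and continuous in $\xi$. Before starting I would record three soft facts about the operation $[\cdot]$ and about spreading. First, directly from the definitions one checks the associativity-type containment $(\mathcal{F}[\mathcal{G}])[\mathcal{H}]\subseteq\mathcal{F}[\mathcal{G}[\mathcal{H}]]$: a decomposition of a set into $\mathcal{H}$-blocks whose block-minima are themselves decomposed by $\mathcal{F}[\mathcal{G}]$ can be regrouped so that consecutive runs of $\mathcal{H}$-blocks form $\mathcal{G}[\mathcal{H}]$-sets whose minima land in $\mathcal{F}$. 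Second, the family $\mathcal{S}_\alpha[\mathcal{S}_\beta]$ inherits the spreading property from $\mathcal{S}_\alpha$ and $\mathcal{S}_\beta$. Third, because $\mathcal{S}_\alpha[\mathcal{S}_\beta]$ is spreading, passing the index sequence $L$ to a subsequence can only shrink $\mathcal{S}_\alpha[\mathcal{S}_\beta](L)$; this ``monotonicity in $L$'' is what will let me replace a whole family of witnesses by a single diagonal one.

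For the base case $\xi=1$ I would simply note that the definition of $\mathcal{S}_{\zeta+1}$ is literally $\mathcal{S}_1[\mathcal{S}_\zeta]$, so $L=\mathbb{N}$ works with equality. For the successor case $\xi=\eta+1$ I would use associativity to write $\mathcal{S}_{\eta+1}[\mathcal{S}_\zeta]=(\mathcal{S}_1[\mathcal{S}_\eta])[\mathcal{S}_\zeta]\subseteq\mathcal{S}_1[\mathcal{S}_\eta[\mathcal{S}_\zeta]]$, take the witness $L'$ furnished by the inductive hypothesis for $\eta$, and check that spreading by $L'$ distributes over the outer $\mathcal{S}_1$ layer: each inner $\mathcal{S}_\eta[\mathcal{S}_\zeta]$-block is carried into $\mathcal{S}_{\zeta+\eta}$, while the outer condition ``$n\leq\min$'' defining $\mathcal{S}_1$ survives because $l'_i\geq i$. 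Hence the spread set lies in $\mathcal{S}_1[\mathcal{S}_{\zeta+\eta}]=\mathcal{S}_{\zeta+\eta+1}=\mathcal{S}_{\zeta+(\eta+1)}$, so $L=L'$ works and no stabilization input is needed.

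The limit case is where the real work lies, and it is the step I expect to be the main obstacle. Fix the sequence $(\zeta_n)\uparrow\xi$ defining $\mathcal{S}_\xi$, together with the sequence defining $\mathcal{S}_{\zeta+\xi}$. Given $F\in\mathcal{S}_\xi[\mathcal{S}_\zeta]$, its layer-minima set lies in some $\mathcal{S}_{\zeta_n}$ with $n\leq\min F$, so in fact $F\in\mathcal{S}_{\zeta_n}[\mathcal{S}_\zeta]$. By the inductive hypothesis applied to $\zeta_n<\xi$ there is $M_n$ with $\mathcal{S}_{\zeta_n}[\mathcal{S}_\zeta](M_n)\subseteq\mathcal{S}_{\zeta+\zeta_n}$, and since $\zeta+\zeta_n<\zeta+\xi$ the stabilization fact recalled above (from \cite{AGR03}) supplies a threshold $d_n$ beyond which $\mathcal{S}_{\zeta+\zeta_n}\subseteq\mathcal{S}_{\zeta+\xi}$. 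I would then build a single infinite $L=(l_i)$ by diagonalizing over $n$: arrange $l_i\in\bigcap_{n\leq i}M_n$, so that the tail $\{l_i:i\geq n\}$ sits inside $M_n$; arrange $l_i\geq\max_{n\leq i}d_n$; and take $l_i$ large relative to the initial segments of the $M_n$, so that the relabelling induced by $L$ increases indices and the spreading property becomes applicable.

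With such an $L$, any $F$ as above has $\min F\geq n$, so its spread is controlled entirely by the tail of $L$ inside $M_n$; hence the spread lands in $\mathcal{S}_{\zeta_n}[\mathcal{S}_\zeta](M_n)\subseteq\mathcal{S}_{\zeta+\zeta_n}$ and has minimum at least $l_n\geq d_n$, whence it lies in $\mathcal{S}_{\zeta+\xi}$. The delicate points I would spend the most care on are exactly the bookkeeping that the tail of $L$ genuinely reduces matters to $M_n$ (the monotonicity in $L$ above) and the index-matching that makes the spreading property applicable, together with checking that the fixed approximating sequence defining $\mathcal{S}_{\zeta+\xi}$ is correctly absorbed by the thresholds $d_n$. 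Everything else is definitional; these reconciliations at the limit stage are the genuine content of the argument.
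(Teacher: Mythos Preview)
The paper does not provide its own proof of this proposition; it is simply quoted from \cite[Proposition 3.2(b)]{OTW97} as a known technical fact about Schreier families, so there is no in-paper argument to compare against.

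Your transfinite induction on $\xi$ is the standard route and is essentially correct. The base and successor steps are clean: the identity $\mathcal{S}_{\zeta+1}=\mathcal{S}_1[\mathcal{S}_\zeta]$ and the associativity containment $(\mathcal{S}_1[\mathcal{S}_\eta])[\mathcal{S}_\zeta]\subseteq\mathcal{S}_1[\mathcal{S}_\eta[\mathcal{S}_\zeta]]$ are exactly what is needed, and your check that spreading along $L'$ preserves the outer $\mathcal{S}_1$-condition (via $l'_i\geq i$) is the right observation. In the limit step, the one point that deserves a precise statement rather than a gesture is the claim that the $L$-spread of $F$ lands in $\mathcal{S}_{\zeta_n}[\mathcal{S}_\zeta](M_n)$: writing $M_n=(m^{(n)}_j)$ and $l_i=m^{(n)}_{j_n(i)}$ for $i\geq n$, you need $\{j_n(i):i\in F\}\in\mathcal{S}_{\zeta_n}[\mathcal{S}_\zeta]$, which follows from the spreading property provided $j_n(i)\geq i$. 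This is not automatic from $\{l_i:i\geq n\}\subseteq M_n$ alone, but your additional requirement that $l_i$ be ``large relative to the initial segments of the $M_n$'' (concretely, $l_i\geq m^{(n)}_i$ for all $n\leq i$) forces it. With that made explicit, the diagonal construction goes through and the argument is complete.
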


For a fixed ordinal $1\leq\xi\leq\omega_1$, we say that a sequence $(x_n)$ in a Banach space $X$ {\bf $\boldsymbol{\mathcal{S}_\xi}$-dominates} another sequence $(y_n)$ in a Banach space $Y$ just in case there is a constant $C\in[1,\infty)$ satisfying
\[\norm{\sum_{n\in F} a_ny_n}\leq C\norm{\sum_{n\in F} a_nx_n}\]
for all $(a_n)\in c_{00}$ and $F\in\mathcal{S}_\xi$. When $\xi = \omega_1$ we will simply
say that $(x_n)$ {\bf dominates} $(y_n)$ and this will coincide with the usual notion of domination in literature.
In this case we write $(x_n)\geq_C(y_n)$ or, when $C$ is unimportant, simply $(x_n)\geq(y_n)$. 

The remainder of the paper is divided into two parts.  In the next section we use Lorentz sequence spaces to show that class $\mathcal{WS}_{e,\xi}$, $1\leq\xi\leq\omega_1$, fails to be closed under addition when $e=(e_n)$ is chosen from among the canonical bases for $\ell_p$, $1<p<\infty$.  After that, in the last section, we define and study new classes $\mathcal{JS}_{e,\xi}$. There we show that, in the case when $e=(e_n)$ is the canonical basis for either $\ell_p$, $1\leq p<\infty$, or $c_0$, $\mathcal{JS}_{e,\omega_1}$ is a norm-closed operator ideal.  We conclude by using the new classes $\mathcal{JS}_{e,\xi}$ to show in Theorem \ref{main} that $\mathcal{L}(X)$ admits infinitely many closed ideals whenever $X$ contains complemented copies of $\ell_1$ and either some $\ell_q$, $1<q\leq\infty$, or $c_0$; the same holds if $X$ instead contains complemented copies of $\ell_\infty$ and some $\ell_p$, $1\leq p<\infty$.

\section{Classes $\mathcal{WS}_{e,\xi}$}

\begin{definition}Let $X$ and $Y$ be Banach spaces, and let $1\leq\xi\leq\omega_1$ be an ordinal.  Fix any normalized basis $e=(e_n)$.  We define $\mathcal{WS}_{e,\xi}(X,Y)$ as the set of all operators $T\in\mathcal{L}(X,Y)$ such that for any normalized basic sequence $(x_n)$ in $X$, the image sequence $(Tx_n)$ fails to $\mathcal{S}_\xi$-dominate $(e_n)$.\end{definition}

The following is a straightforward observation based on the basic constant of $(e_n)$.

\begin{proposition}\label{WS-compact}Let $1\leq\xi\leq\omega_1$, let $e=(e_n)$ be any normalized basis, and let $X$ and $Y$ be Banach spaces with $T\in\mathcal{L}(X,Y)$.  If $(x_n)$ is a sequence in $X$ and $(Tx_n)$ has a norm-convergent subsequence, then for every $\epsilon>0$ and $N>0$ there exist $m$ and $n$ such that $N<n<m$ and
\[\norm{Tx_m-Tx_n}_Y<\epsilon\norm{e_m-e_n}_E.\]
If this happens for every normalized basic sequence in $X$, then $T\in\mathcal{WS}_{e,\xi}(X,Y)$.  In particular, we always have $\mathcal{K}(X,Y)\subseteq\mathcal{WS}_{e,\xi}(X,Y)$.\end{proposition}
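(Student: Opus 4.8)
The plan is to dispatch the three assertions in turn: the first is the analytic heart, the second a short argument by contraposition, and the last an immediate corollary. The workhorse is a uniform lower bound on the gaps of the basis hinted at by the phrase ``based on the basic constant of $(e_n)$''. If $K$ denotes the basis constant of the normalized basis $e=(e_n)$, then the associated coordinate functionals satisfy $\norm{e_k^*}_{E^*}\le 2K$, and since $e_m^*(e_m-e_n)=1$ whenever $m\ne n$, we obtain $\norm{e_m-e_n}_E\ge 1/(2K)>0$ for all $m\ne n$. For the first assertion I would pass to a subsequence $(x_{n_k})$ along which $(Tx_{n_k})$ is norm-convergent, hence norm-Cauchy. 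Given $\epsilon>0$ and $N>0$, I would pick $k<l$ with $N<n_k$ and $\norm{Tx_{n_l}-Tx_{n_k}}_Y<\epsilon/(2K)$; then $n:=n_k$ and $m:=n_l$ satisfy $N<n<m$ and, via the lower bound, $\norm{Tx_m-Tx_n}_Y<\epsilon/(2K)\le\epsilon\norm{e_m-e_n}_E$, as required.

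For the second assertion I would argue by contraposition. Suppose $T\notin\mathcal{WS}_{e,\xi}(X,Y)$, so there is a normalized basic sequence $(x_n)$ for which $(Tx_n)$ $\mathcal{S}_\xi$-dominates $(e_n)$ with some constant $C$. The key observation is that $\mathcal{S}_1\subseteq\mathcal{S}_\xi$ and that every doubleton $\{n,m\}$ with $2\le n<m$ belongs to $\mathcal{S}_1$. Feeding $F=\{n,m\}$ together with the scalars $1$ and $-1$ (at positions $n$ and $m$) into the domination inequality yields $\norm{e_m-e_n}_E\le C\norm{Tx_m-Tx_n}_Y$, that is, $\norm{Tx_m-Tx_n}_Y\ge C^{-1}\norm{e_m-e_n}_E$ for all such $n<m$. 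But applying the standing hypothesis with $\epsilon=1/(2C)$ and $N=1$ produces a pair $1<n<m$ violating exactly this inequality, a contradiction; hence $T\in\mathcal{WS}_{e,\xi}(X,Y)$.

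The final inclusion is then immediate: a compact operator $T$ sends the bounded sequence $(x_n)$---in particular any normalized basic sequence---to a sequence $(Tx_n)$ possessing a norm-convergent subsequence, so the first assertion applies to every normalized basic sequence and the second delivers $T\in\mathcal{WS}_{e,\xi}(X,Y)$. I do not anticipate a genuine obstacle here; the only point demanding care is the uniform positivity $\inf_{m\ne n}\norm{e_m-e_n}_E>0$, which is precisely what renders the estimate in the first assertion independent of the indices $m,n$ and thereby lets it interact cleanly with the Schreier-family domination exploited in the second.
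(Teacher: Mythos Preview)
Your proposal is correct and follows exactly the approach the paper has in mind: the paper omits the proof entirely, calling it ``a straightforward observation based on the basic constant of $(e_n)$,'' and your argument unpacks precisely that---the uniform lower bound $\norm{e_m-e_n}_E\geq 1/(2K)$ coming from the basis constant, combined with the Cauchy property of a convergent subsequence and the inclusion $\mathcal{S}_1\subseteq\mathcal{S}_\xi$ (stated in the paper's preliminaries) to handle doubletons. There is nothing to add.
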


In view of the last proposition, in several proofs, we will be concentrating on sequences with no convergent subsequences. Applying Rosenthal's $\ell_1$ Theorem (cf., e.g., \cite[Theorem 10.2.1]{AK06}) together with \cite[Theorem 1.5.4]{AK06}, we obtain the following standard fact.

\begin{proposition}\label{difference-sequence}Suppose $X$ and $Y$ are Banach spaces with $T\in\mathcal{L}(X,Y)$.  If $(x_n)$ is a bounded sequence in $X$, then there exists a subsequence $(x_{n_k})$ so that exactly one of the following holds.
\begin{itemize}\item $(Tx_{n_k})$ is norm-convergent.
\item The sequences $(x_{n_{4k+2}}-x_{n_{4k}})$ and $(Tx_{n_{4k+2}}-Tx_{n_{4k}})$ are both seminormalized and basic, and each of them are either weakly null or else equivalent to the canonical basis of $\ell_1$.\end{itemize}\end{proposition}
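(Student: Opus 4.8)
The plan is to produce the subsequence in two stages via Rosenthal's $\ell_1$ theorem and then to analyze the two difference sequences through a case split governed by the behavior of the image. First I would apply Rosenthal's $\ell_1$ theorem to the bounded sequence $(x_n)$ to pass to a subsequence (which I relabel) that is either weakly Cauchy or equivalent to the canonical $\ell_1$-basis, and then apply it once more to $(Tx_n)$ to pass to a further subsequence enjoying the same dichotomy. Along any such subsequence $(x_{n_k})$ I set $u_k=x_{n_{4k+2}}-x_{n_{4k}}$ and $v_k=Tx_{n_{4k+2}}-Tx_{n_{4k}}=Tu_k$, so that the two alternatives in the statement concern exactly these difference sequences.

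The argument rests on two elementary observations. First, if a sequence is weakly Cauchy then differences of the above form are weakly null, whereas if it is $\ell_1$-equivalent then these differences are disjointly supported, seminormalized blocks and are therefore again $\ell_1$-equivalent. Second, since $\norm{u_k}\le 2\sup_n\norm{x_n}$ and $\norm{v_k}\le\norm{T}\,\norm{u_k}$, boundedness below of $(v_k)$ forces boundedness below of $(u_k)$, so that seminormalization transfers automatically from the image side to the domain side. Combined with the fact (\cite[Theorem 1.5.4]{AK06}) that a seminormalized weakly null sequence has a basic subsequence, these observations reduce everything to controlling $(v_k)$.

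I would then split on the image sequence. If $(Tx_n)$ is $\ell_1$-equivalent, then no subsequence of $(Tx_{n_k})$ is norm-convergent, $(v_k)$ is a seminormalized $\ell_1$-block and hence basic, and by the second observation $(u_k)$ is seminormalized; being weakly null or $\ell_1$-equivalent, it becomes basic after a further subsequence. If instead $(Tx_n)$ is weakly Cauchy, I ask whether it is norm-convergent: if so, the first bullet holds. Otherwise $\limsup_{m,n}\norm{Tx_m-Tx_n}=\delta>0$, so a greedy selection lets me arrange the prescribed differences to satisfy $\norm{v_k}\ge\delta/2$; then $(v_k)$ is seminormalized and weakly null, $(u_k)$ is seminormalized by the second observation and is weakly null or $\ell_1$-equivalent, and each has a basic subsequence. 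Passing to a common further subsequence makes both $(u_k)$ and $(v_k)$ basic, yielding the second bullet.

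Finally I would verify that exactly one alternative holds: the first occurs precisely when $(Tx_{n_k})$ is norm-convergent, which forces $v_k\to 0$ and thus destroys the seminormalization demanded by the second; conversely the second keeps $(v_k)$ bounded below, so $(Tx_{n_k})$ is not norm-Cauchy. The main obstacle is the weakly-Cauchy-but-not-norm-convergent case: I must extract the differences at the prescribed positions $4k$ and $4k+2$ so that they are bounded below (this is what the greedy selection accomplishes), and then thread the index bookkeeping so that after all the nested sub-selections needed to make both $(u_k)$ and $(v_k)$ basic, the final relabeled subsequence still realizes these good difference sequences at positions $4k$ and $4k+2$.
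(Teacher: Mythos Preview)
Your proposal is correct and follows precisely the route the paper indicates: the paper does not spell out a proof but merely records that the result follows by combining Rosenthal's $\ell_1$ Theorem with \cite[Theorem 1.5.4]{AK06}, which is exactly the pair of ingredients (the weak-Cauchy/$\ell_1$ dichotomy, and the extraction of a basic subsequence from a seminormalized weakly null sequence) that you invoke and organize via the case split on $(Tx_n)$. Your handling of the index bookkeeping and of the ``weakly Cauchy but not norm-convergent'' case is the expected elaboration of what the paper leaves implicit.
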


\noindent This gives us an equivalent characterization of the classes in terms of bounded sequences instead of normalized basic sequences.

\begin{proposition}\label{seminormalized}Let $X$ and $Y$ be Banach spaces, and let $1\leq\xi\leq\omega_1$ be an ordinal.  Fix any normalized basis $e=(e_n)$ satisfying $(e_{4n+2}-e_{4n})\geq(e_{n})$.  Then $\mathcal{WS}_{e,\xi}(X,Y)$ is the set of all operators $T\in\mathcal{L}(X,Y)$ such that for any bounded sequence $(x_n)$ in $X$, the image sequence $(Tx_n)$ fails to $\mathcal{S}_\xi$-dominate $(e_n)$.\end{proposition}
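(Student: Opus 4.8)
The plan is to prove the two inclusions, one of which is immediate. Since every normalized basic sequence is bounded, any $T$ satisfying the failure-of-domination condition for \emph{all} bounded sequences automatically satisfies it for all normalized basic sequences, so that class is contained in $\mathcal{WS}_{e,\xi}(X,Y)$. The content is the reverse inclusion, which I would prove by contraposition: assuming there is a \emph{bounded} sequence $(x_n)$ in $X$ whose image $\mathcal{S}_\xi$-dominates $(e_n)$, say $(Tx_n)\geq_C(e_n)$, I will manufacture a \emph{normalized basic} sequence $(z_k)$ with $(Tz_k)\geq(e_k)$, witnessing $T\notin\mathcal{WS}_{e,\xi}(X,Y)$.

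First I would feed $(x_n)$ into Proposition \ref{difference-sequence} to extract a subsequence $(x_{n_k})$ falling into one of the two alternatives there. The first alternative (a norm-convergent image subsequence) is incompatible with the domination hypothesis: applying Proposition \ref{WS-compact} with any $\epsilon<1/C$ and $N\geq 2$ produces $N<n<m$ with $\norm{Tx_m-Tx_n}<\epsilon\norm{e_m-e_n}$, while the pair $\{n,m\}$ lies in $\mathcal{S}_1\subseteq\mathcal{S}_\xi$, so $\mathcal{S}_\xi$-domination gives $\norm{e_m-e_n}\leq C\norm{Tx_m-Tx_n}<C\epsilon\norm{e_m-e_n}$, forcing $1<C\epsilon$, a contradiction. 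Hence the second alternative holds, and $y_k:=x_{n_{4k+2}}-x_{n_{4k}}$ is seminormalized and basic, with $(Ty_k)$ likewise seminormalized and basic. I set $z_k:=y_k/\norm{y_k}$, a normalized basic sequence, and write $\delta\leq\norm{y_k}\leq\Delta$.

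Next I would verify that $(Tz_k)$ $\mathcal{S}_\xi$-dominates $(e_k)$. Fix $F\in\mathcal{S}_\xi$ and scalars $(a_k)$, and consider the index set $G=\{n_{4k},n_{4k+2}:k\in F\}$. To see $G\in\mathcal{S}_\xi$: the spreading property applied to $\{2k:k\in F\}$ (using $k\leq 2k$) keeps it in $\mathcal{S}_\xi$; Proposition \ref{2x} then gives $\{4k,4k+2:k\in F\}\in\mathcal{S}_\xi$; and a final application of the spreading property (using $j\leq n_j$, compared termwise in the increasing enumerations) yields $G\in\mathcal{S}_\xi$. Applying the domination $(Tx_n)\geq_C(e_n)$ on $G$ with coefficients $c_{n_{4k+2}}=a_k/\norm{y_k}$ and $c_{n_{4k}}=-a_k/\norm{y_k}$ gives
\[\Bignorm{\sum_{k\in F}\frac{a_k}{\norm{y_k}}\bigl(e_{n_{4k+2}}-e_{n_{4k}}\bigr)}\leq C\,\Bignorm{\sum_{k\in F}a_k\,Tz_k}.\]

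The remaining, and genuinely delicate, step is to bound the left-hand side below by a constant multiple of $\norm{\sum_{k\in F}a_ke_k}$, which then yields $(Tz_k)\geq(e_k)$ and closes the contrapositive. Here I would invoke the standing hypothesis $(e_{4n+2}-e_{4n})\geq(e_n)$ to control the \emph{model} difference sequence $(e_{4k+2}-e_{4k})$, pass to the spread-out differences $(e_{n_{4k+2}}-e_{n_{4k}})$ via the spreading property, and absorb the bounded multipliers $\norm{y_k}^{-1}\in[\Delta^{-1},\delta^{-1}]$. This reconciliation of the subsequence indices $n_{4k},n_{4k+2}$ with the pattern $4k,4k+2$ named in the hypothesis, together with the seminormalizing constants, is the main obstacle. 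It becomes transparent in the cases of interest, namely the canonical bases of $\ell_p$ ($1\leq p<\infty$) and $c_0$: there the vectors $e_{n_{4k+2}}-e_{n_{4k}}$ are disjointly supported and of constant norm, hence uniformly equivalent to $(e_k)$ and unconditional, so the multipliers drop out and the desired lower bound is immediate.
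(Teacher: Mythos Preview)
Your overall route matches the paper's: both invoke Proposition~\ref{difference-sequence} to pass to a seminormalized basic difference sequence, and then leverage the definition of $\mathcal{WS}_{e,\xi}$ (you contrapositively, the paper directly). Your treatment of the convergent-image alternative and of the Schreier membership $G\in\mathcal{S}_\xi$ (spreading, then Proposition~\ref{2x}, then spreading again) is correct and is exactly what the paper does in its last line.

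The genuine gap is your handling of the seminormalizing constants $\|y_k\|^{-1}$. You propose simply to ``absorb the bounded multipliers $\|y_k\|^{-1}\in[\Delta^{-1},\delta^{-1}]$,'' but for a general normalized basis there is no reason that $\bigl\|\sum (a_k/\|y_k\|)\,f_k\bigr\|$ and $\bigl\|\sum a_k\,f_k\bigr\|$ should be comparable merely because the multipliers lie in a fixed interval; this is precisely an unconditionality statement, and you tacitly concede the point by retreating at the end to the $\ell_p$ and $c_0$ bases. The paper's device is different and is the missing idea: after obtaining the seminormalized basic $(z_k)$, it passes to a \emph{further} subsequence along which $\|z_k\|\to r$ for some $r\in(0,\infty)$, quickly enough that the Principle of Small Perturbations makes $\bigl(\|z_k\|^{-1}e_k\bigr)$ $C$-equivalent to $\bigl(r^{-1}e_k\bigr)$, hence to $(e_k)$. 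This eliminates the variable multipliers for \emph{any} normalized basis, with no unconditionality needed, and is what you should incorporate.

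As for the ``main obstacle'' you flag---reconciling the spread-out differences $(e_{n_{4k+2}}-e_{n_{4k}})$ with the hypothesis stated only for $(e_{4n+2}-e_{4n})$---the paper's proof treats this with the same terse appeal to Proposition~\ref{2x}, spreading, and the hypothesis $(e_n)\leq(e_{4n+2}-e_{4n})$; so your explicit acknowledgment here is not a deficiency relative to the paper, and in the applications (symmetric bases) it is indeed immediate.
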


\begin{proof}Suppose $T$ is in $\mathcal{WS}_{e,\xi}(X,Y)$ and fix any bounded sequence $(x_n)$. If $(Tx_n)$ has a norm-convergent subsequence then we are done by Proposition \ref{WS-compact}.  Otherwise, by Proposition \ref{difference-sequence} we can find a subsequence $(x_{n_k})$ so that $(z_k)$ and $(Tz_k)$ are each seminormalized basic, where we define
\[z_k:=x_{n_{4k+2}}-x_{n_{4k}}.\]
Fix $\epsilon>0$, and pass to a further subsequence if necessary so that $\norm{Tz_k}_Y\to r$ for some $0<r<\infty$, and quickly enough so that by the Principle of Small Perturbations, $(\frac{1}{r}e_k)$ is $C$-equivalent, $C\geq 1$, to $(\frac{1}{\norm{z_{k}}_X}e_k)$.  Notice that $(\frac{z_{k}}{\norm{z_{k}}_X})$ is a normalized basic sequence in $X$.  Since $T\in\mathcal{WS}_{e,\xi}(X,Y)$, we can therefore find $(a_n)\in c_{00}$ with support in $\mathcal{S}_\xi$ such that
\[\norm{\sum a_kT\frac{z_{k}}{\norm{z_{k}}_X}}_Y<\frac{\epsilon}{Cr}\norm{\sum a_ke_k}_E\leq\epsilon\norm{\sum\frac{a_k}{\norm{z_{k}}_X}e_k}_E.\]
This is sufficient by Proposition \ref{2x} together with the spreading property of $\mathcal{S}_\xi$ and fact that $(e_n)\leq(e_{4n+2}-e_{4n})$.\end{proof}

For the remainder of this section, let us develop the machinery required to show that class $\mathcal{WS}_{e,\xi}$ fails to be closed under addition whenever $(e_n)$ is the canonical basis for $\ell_p$, $1<p<\infty$.

\begin{proposition}\label{WS-direct-sum}Suppose $(e_n)$ is a normalized basic sequence satisfying $(e_{2n+2}-e_{2n})\geq(e_{n})$, and fix $1\leq\xi\leq\omega_1$.  Let $X_1$ and $X_2$ be Banach spaces, $T$ be an operator from $\mathcal{WS}_{e,\xi}(X_1,X_2)$, and let $Y_1$ and $Y_2$ denote two more Banach spaces.  Then
\[T\oplus 0\in\mathcal{WS}_{e,\xi}(X_1\oplus Y_1,X_2\oplus Y_2)\;\;\text{ and }\;\;0\oplus T\in\mathcal{WS}_{e,\xi}(Y_1\oplus X_1,Y_2\oplus X_2).\]\end{proposition}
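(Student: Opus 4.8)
The plan is to prove the first membership, $T\oplus0\in\mathcal{WS}_{e,\xi}(X_1\oplus Y_1,X_2\oplus Y_2)$; the second is obtained verbatim after interchanging the roles of the two summands, so I would not treat it separately. Fix a normalized basic sequence $(w_n)$ in $X_1\oplus Y_1$ and write $w_n=(x_n,y_n)$ with $x_n\in X_1$ and $y_n\in Y_1$. Then $(T\oplus0)w_n=(Tx_n,0)$, and since the second coordinate vanishes we have $\norm{\sum_{n\in F}a_n(Tx_n,0)}\asymp\norm{\sum_{n\in F}a_nTx_n}_{X_2}$ for all scalars $(a_n)$ and all finite $F$. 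Consequently $((T\oplus0)w_n)$ $\mathcal{S}_\xi$-dominates $(e_n)$ if and only if $(Tx_n)$ does, and so it suffices to prove that $(Tx_n)$ fails to $\mathcal{S}_\xi$-dominate $(e_n)$.

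The subtlety I anticipate is that I cannot simply feed $(x_n)$ into the definition of $\mathcal{WS}_{e,\xi}(X_1,X_2)$: the coordinate projection of a basic sequence need not be basic, and $(x_n)$ need not be normalized either. What is clear is that $(x_n)$ is bounded, since $\norm{x_n}\le\norm{w_n}=1$. The natural remedy is to run the argument through the bounded-sequence reformulation of $(e_n)$-singularity rather than through its definition. Concretely, I would show that for $T\in\mathcal{WS}_{e,\xi}(X_1,X_2)$ every bounded sequence in $X_1$—in particular our $(x_n)$—has image failing to $\mathcal{S}_\xi$-dominate $(e_n)$; this is exactly the content of Proposition \ref{seminormalized}, and applying it to $(x_n)$ closes the proof.

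The work, then, is to justify this bounded-sequence reduction under the standing hypothesis $(e_{2n+2}-e_{2n})\geq(e_n)$, mirroring the proof of Proposition \ref{seminormalized}. First I would invoke Proposition \ref{WS-compact} to dispose of the case where $(Tx_n)$ has a norm-convergent subsequence: there one obtains, for each $\epsilon>0$, arbitrarily late indices $n<m$ with $\norm{Tx_m-Tx_n}<\epsilon\norm{e_m-e_n}$, and since every such pair $\{n,m\}$ with $n$ large lies in $\mathcal{S}_1\subseteq\mathcal{S}_\xi$, this rules out $\mathcal{S}_\xi$-domination. Otherwise Proposition \ref{difference-sequence} lets me pass to a subsequence along which $z_k:=x_{n_{4k+2}}-x_{n_{4k}}$ and $Tz_k$ are both seminormalized and basic, so that $(z_k/\norm{z_k})$ is a genuine normalized basic sequence in $X_1$; singularity of $T$ then gives that its image, and hence $(Tz_k)$, fails to $\mathcal{S}_\xi$-dominate $(e_k)$. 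The main obstacle is the final transfer: converting this failure for the difference sequence back into a failure for $(Tx_n)$ itself. This requires that the displaced differences $(e_{n_{4k+2}}-e_{n_{4k}})$ still $\mathcal{S}_\xi$-dominate $(e_k)$, which is precisely where Proposition \ref{2x}, the spreading property of $\mathcal{S}_\xi$, and the hypothesis $(e_{2n+2}-e_{2n})\geq(e_n)$ combine; confirming that the stated $2n$-hypothesis is exactly what the support arithmetic of Proposition \ref{2x} demands is the one point I would check with care.
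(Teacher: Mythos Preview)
Your approach is essentially the same as the paper's: project the sequence to the first coordinate to get a bounded (but not necessarily basic) sequence $(x_n)$ in $X_1$, then invoke the bounded-sequence reformulation of $(e_n)$-singularity (Proposition~\ref{seminormalized}) to conclude that $(Tx_n)$ fails to $\mathcal{S}_\xi$-dominate $(e_n)$. The paper's proof is two lines because it cites Proposition~\ref{seminormalized} as a black box (applying it once to $T\oplus 0$ to reduce to bounded sequences, and once to $T$ on the projected sequence), whereas you start from the definition of $\mathcal{WS}_{e,\xi}$ for $T\oplus 0$ and then re-sketch the proof of Proposition~\ref{seminormalized} inline; this re-derivation is correct but unnecessary, since the proposition is already available. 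Your observation about the discrepancy between the $2n$-hypothesis stated here and the $4n$-hypothesis required by Proposition~\ref{seminormalized} is a legitimate point worth checking, but it is an issue with the paper's statement rather than with your argument.
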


\begin{proof}By symmetry it suffices to prove the first statement.  Let $(x_n\oplus y_n)$ be a bounded sequence in $X_1\oplus Y_1$, and let $\epsilon>0$.  Then $(x_n)$ is bounded in $X_1$, and so by Proposition \ref{seminormalized} we can find $(a_k)\in c_{00}$ with support in $\mathcal{S}_\xi$ such that
\[\norm{\sum a_n(T\oplus 0)(x_n\oplus y_n)}_{X_2\oplus Y_2}=\norm{\sum a_nTx_n}_{X_2}<\epsilon\norm{\sum a_ne_n}_E.\]\end{proof}

Let us now describe the Lorentz sequence spaces.  Suppose $1\leq q<\infty$, and let $w=(w_n)_{n=1}^\infty\in c_0\setminus\ell_1$ be a nonincreasing sequence with $w_1 =1$.  Denote by $\Pi$ the set of all permutations of $\mathbb{Z}^+$.  We define the set $d(w,q)$ as the set of all scalar sequences $(a_n)_{n=1}^\infty$ such that
\[\norm{(a_n)_{n=1}^\infty}_{d(w,q)}:=\sup_{\sigma\in\Pi}\left(\sum_{n=1}^{\infty}|a_{\sigma(n)}|^qw_n\right)^{1\over q} < \infty.\]
When endowed with the norm $\norm{\cdot}_{d(w,q)}$, the set $d(w,q)$ defines a Banach space with a canonical basis which is normalized and symmetric.  Note that, due to the properties of $w$, we can equivalently characterize the $d(w,q)$ norm as follows.  If $(a_n)_{n=1}^\infty\in d(w,q)$, then we denote by $(a_n^*)_{n=1}^\infty$ the nonincreasing rearrangement of $(|a_n|)_{n=1}^\infty$.  In this case,
\[\norm{(a_n)_{n=1}^\infty}_{d(w,q)}=\norm{(a_n^*w_n^{1/q})_{n=1}^\infty}_{\ell_q}.\]
We shall call any such space $d(w,q)$ a {\bf Lorentz sequence space}.  See \cite[\S4.e]{LT77} for further discussion of these spaces.

\begin{proposition}\label{lorentz-dominates-lp}Fix numbers $p$ and $q$ such that $1\leq q<p<\infty$.  Let $w=(w_n)$ and $\tilde{w}=(\tilde{w}_n)$ be nonincreasing sequences, $w_1=\tilde{w}_1=1$, lying in $c_0\setminus\ell_1$, with $(x_n)$ and $(\tilde{x}_n)$ the respective canonical bases for $d(w,q)$ and $d(\tilde{w},q)$.  Suppose that $w_n+\tilde{w}_n\geq n^{\frac{1}{p}-\frac{1}{q}}$.  Then $(x_n\oplus\tilde{x}_n)\subset X$ 1-dominates the canonical basis of $\ell_p$, where we define
\[X:=\left(d(w,q)\oplus d(\tilde{w},q)\right)_{\ell_q}.\]\end{proposition}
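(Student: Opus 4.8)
The plan is to reduce the statement to a single numerical inequality about nonincreasing scalar sequences and then to establish that inequality by combining monotonicity with the hypothesis on the weights. Since we are asserting $1$-domination (and $\xi=\omega_1$), it suffices to show that $\norm{\sum_n a_n(x_n\oplus\tilde x_n)}_X\ge(\sum_n|a_n|^p)^{1/p}$ for every $(a_n)\in c_{00}$. First I would unwind the left-hand side: by the definition of the $\ell_q$-direct sum together with the rearrangement formula $\norm{\sum_n a_n x_n}_{d(w,q)}^q=\sum_n(a_n^*)^q w_n$ for Lorentz spaces, and using the crucial fact that the two canonical bases share the \emph{same} nonincreasing rearrangement $(a_n^*)$ of $(|a_n|)$, one obtains
\[
\bignorm{\sum_n a_n(x_n\oplus\tilde x_n)}_X^q=\sum_n(a_n^*)^q w_n+\sum_n(a_n^*)^q\tilde w_n=\sum_n(a_n^*)^q\,(w_n+\tilde w_n).
\]
Applying the hypothesis $w_n+\tilde w_n\ge n^{1/p-1/q}$ termwise then reduces everything to the claim that, for any nonincreasing $b_1\ge b_2\ge\cdots\ge 0$ in $c_{00}$ (namely $b_n=a_n^*$), one has $\sum_n b_n^q\,n^{1/p-1/q}\ge(\sum_n b_n^p)^{q/p}$.

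The heart of the argument is this last inequality, which I would prove with constant $1$ as follows. Write $\beta:=\tfrac1p-\tfrac1q<0$, put $L:=\sum_n b_n^q n^{\beta}$, and set $W(n):=\sum_{k=1}^n k^{\beta}$. Because $(b_k)$ is nonincreasing, for each $n$ we have $b_n^q\,W(n)=\sum_{k=1}^n b_n^q k^\beta\le\sum_{k=1}^n b_k^q k^\beta\le L$, so $b_n^q\le L/W(n)$. The key step is a comparison of weights: since $\beta<0$ the terms $k^\beta$ decrease, whence $W(n)\ge n\cdot n^{\beta}=n^{1+\beta}$, and because $1+\beta-\tfrac1p=1-\tfrac1q\ge 0$ — this is exactly where $q\ge 1$ is used — we get $W(n)\ge n^{1/p}$, and hence $W(n)^{-(p-q)/q}\le n^{\beta}$ for every $n$.

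With these two facts in hand I would finish by splitting $b_n^p=b_n^q\cdot b_n^{p-q}$ and estimating $b_n^{p-q}=(b_n^q)^{(p-q)/q}\le(L/W(n))^{(p-q)/q}$. Summing gives
\[
\sum_n b_n^p\le L^{(p-q)/q}\sum_n b_n^q\,W(n)^{-(p-q)/q}\le L^{(p-q)/q}\sum_n b_n^q\,n^{\beta}=L^{(p-q)/q}\cdot L=L^{p/q},
\]
and raising to the power $q/p$ yields $(\sum_n b_n^p)^{q/p}\le L$, which is precisely the desired inequality. The main obstacle is locating the correct numerical inequality and, within it, the weight comparison $W(n)\ge n^{1/p}$; the norm computation and the final summation are routine. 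Note that no constant is lost anywhere, so the domination constant is genuinely $1$, as claimed.
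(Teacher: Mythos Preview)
Your proof is correct and the initial reduction coincides with the paper's: both compute $\bignorm{\sum a_n(x_n\oplus\tilde x_n)}_X^q=\sum (a_n^*)^q(w_n+\tilde w_n)$ and then invoke the hypothesis to reduce to the purely numerical claim $\sum_n b_n^q\,n^{1/p-1/q}\ge\bigl(\sum_n b_n^p\bigr)^{q/p}$ for nonincreasing $(b_n)\in c_{00}$.

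The divergence is in how that numerical inequality is proved. The paper argues by induction on the length of the support: assuming the inequality for $k$ terms, it defines $f(t)=\|(a_n^*)_{n=1}^k\|_{\ell_p}^q+t^q(k+1)^{1/p-1/q}-\bigl(\sum_{n\le k}a_n^{*p}+t^p\bigr)^{q/p}$ and shows $f\ge 0$ on $[0,a_k^*]$ by checking $f(0)=0$ and $f'\ge 0$ via a calculus computation (the condition $q\ge 1$ enters when comparing the exponents $(q-p)/(pq)$ and $(q-p)/p$). Your route is instead a direct, global estimate: from monotonicity you extract the pointwise bound $b_n^q\le L/W(n)$, and then the single elementary comparison $W(n)\ge n\cdot n^\beta\ge n^{1/p}$ (again using $q\ge 1$) converts this into $W(n)^{-(p-q)/q}\le n^\beta$, after which the sum telescopes back to $L^{p/q}$. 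Your argument avoids both the induction and the derivative computation, and it makes transparent that the inequality is really the classical embedding $\ell_p\hookrightarrow d\bigl((n^{q/p-1}),q\bigr)$ with constant $1$; the paper's approach, on the other hand, is perhaps more mechanical to discover once one decides to induct on the support size.
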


\begin{proof}Set, for convenience, $v_n:=w_n+\tilde{w}_n$.  Our goal is to prove that for each $k$ and every $(a_n)\in c_{00}$ with $\text{supp}(a_n) \subset [1, k]$ we have the following inequality:
\[\norm{\sum a_n(x_n\oplus\tilde{x}_n)}^q_X=\norm{\sum_{n=1}^k a_n(x_n\oplus\tilde{x}_n)}^q_X=\sum_{n=1}^k a_n^{*q}v_n\geq\norm{(a_n)}^q_{\ell_p}.\]
Clearly, the result holds when $k=1$.  Now assume it holds for some $k\in\mathbb{N}$ and let us prove the inequality for $k+1$. Using the inductive assumption, we can write 
\[\sum_{n=1}^{k+1} a_n^{*q}v_n\ge \norm{(a_n^*)_{n=1}^k}^q_{\ell_p}+a_{k+1}^{*q}v_{k+1}\ge \norm{(a_n^*)_{n=1}^k}^q_{\ell_p}+a_{k+1}^{*q}(k+1)^{\frac{1}{p}-\frac{1}{q}}\]
Define a function $f:[0,a_k^*]\to\mathbb{R}$ by the rule
\[f(t)=\norm{(a_n^*)_{n=1}^k}_{\ell_p}^q+t^q(k+1)^{\frac{1}{p}-\frac{1}{q}}-\left(\sum_{n=1}^ka_n^{*p}+t^p\right)^{q\over p}.\]
Notice that if we show that $f$ is nonnegative, then we are done. For that, observe
\begin{eqnarray*}
f'(t)=qt^{q-1}(k+1)^{\frac{1}{p}-\frac{1}{q}}-qt^{p-1}\left(\sum_{n=1}^ka_n^{*p}+t^p\right)^{{q\over p}-1}
\\=qt^{q-1}\left[(k+1)^{\frac{1}{p}-\frac{1}{q}}-t^{p-q}\left(\sum_{n=1}^ka_n^{*p}+t^p\right)^{q-p \over p}\right]
\quad\text{ then, as each }a_n^*\ge a_k^*\ge t,
\\\ge qt^{q-1}\left[(k+1)^{\frac{1}{p}-\frac{1}{q}}-t^{p-q}\left((k+1)t^p\right)^{q-p \over p}\right]
=qt^{q-1}\left[(k+1)^{q-p \over pq}-(k+1)^{q-p \over p}\right].
\end{eqnarray*}
The latter is nonnegative for each $t\in [0,a_k^*]$ due to $1\le q< p$. 
Since $f(0)=0$ the proposition is proved.
\end{proof}

\begin{proposition}\label{alternating-pair}Let $1\leq q<p<\infty$.  There exists a pair of nonincreasing sequences $w=(w_n)$ and $\tilde{w}=(\tilde{w}_n)$, $w_1=\tilde{w}_1=1$, lying in $c_0\setminus\ell_1$, such that neither of the respective canonical bases $(x_n)$ or $(\tilde{x}_n)$ of $d(w,q)$ and $d(\tilde{w},q)$ dominates the canonical basis of $\ell_p$, but their $\ell_q$-direct sum $(x_n\oplus\tilde{x}_n)$ does.\end{proposition}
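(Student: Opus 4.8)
The goal is to construct two weight sequences $w$ and $\tilde{w}$ so that the hypothesis $w_n+\tilde{w}_n\geq n^{1/p-1/q}$ of Proposition \ref{lorentz-dominates-lp} holds (guaranteeing that the $\ell_q$-sum dominates the $\ell_p$-basis), while arranging that each individual weight sequence fails to give domination of $\ell_p$. The plan is to take the two weights to be ``out of phase'' with each other, so that on long blocks of indices one weight is large while the other is essentially negligible, and then to alternate which weight is the large one. That way the pointwise sum stays above the target profile $n^{1/p-1/q}$ everywhere, but neither weight individually can dominate $\ell_p$ because each one repeatedly drops far below that profile on arbitrarily long blocks.

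First I would set $u_n:=n^{1/p-1/q}$, the target profile; note $u_n$ is nonincreasing, $u_1=1$, and $u\in c_0\setminus\ell_1$ precisely because $0<1/q-1/p<1$ forces $\sum u_n^{\text{(some exponent)}}$ to behave correctly (the key point being $u_n\to 0$ but slowly). Next I would fix a rapidly increasing sequence of integers $0=N_0<N_1<N_2<\cdots$ defining consecutive blocks $B_j=(N_{j-1},N_j]$, and define $w$ and $\tilde{w}$ by declaring, on each block $B_j$, that the ``active'' weight equals $u_n$ (up to the nonincreasing constraint) while the ``dormant'' weight is held constant at its value $u_{N_{j-1}}$ from the end of the previous block, with $w$ active on odd blocks and $\tilde{w}$ active on even blocks. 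Since at every index at least one of the two weights equals $u_n$, the inequality $w_n+\tilde{w}_n\geq u_n$ is immediate. I would also check the required structural conditions: both sequences are nonincreasing (the active piece follows the nonincreasing $u_n$, the dormant piece is flat, and at block boundaries the flat value is taken from the left endpoint so no jump up occurs), both have first term $1$, both lie in $c_0$ (they track $u_n\to 0$ along the active blocks and the dormant plateaus also tend to $0$ if the $N_j$ grow fast enough), and both fail to be summable (the active contributions alone already diverge, as $u\notin\ell_1$, provided the active blocks are chosen to capture a definite fraction of the mass of $u$).

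The heart of the argument is showing that neither canonical basis dominates $\ell_p$. By the symmetric-basis formula, domination of the $\ell_p$-basis by $(x_n)$ would mean $\sum a_n^{*q}w_n\geq c\,\|(a_n)\|_{\ell_p}^q$ for some $c>0$ and all finitely supported $(a_n)$. To defeat this for $w$, I would test against vectors supported on a single dormant block for $w$ — that is, an even block $B_j$ where $w$ is frozen at the tiny constant value $u_{N_{j-1}}$. Taking $a_n$ equal to $1$ on $B_j$ and $0$ elsewhere (so the nonincreasing rearrangement is flat of length $\#B_j=N_j-N_{j-1}$), the left-hand side is at most $(\#B_j)\,u_{N_{j-1}}$ while $\|(a_n)\|_{\ell_p}^q=(\#B_j)^{q/p}$; the ratio is $(\#B_j)^{1-q/p}u_{N_{j-1}}$, and by choosing the block lengths to grow fast enough relative to where they sit (i.e. making $N_j/N_{j-1}\to\infty$ so that $u_{N_{j-1}}$ is comparable to $u$ at the block's \emph{start} but the block is exponentially longer than its starting index), this ratio tends to $0$ as $j\to\infty$. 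Hence no uniform constant $c$ can work, so $(x_n)$ does not dominate $\ell_p$; the symmetric argument with odd blocks handles $\tilde{x}_n$.

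The main obstacle I anticipate is the bookkeeping in the block-length choice: I need the blocks simultaneously long enough that the failure-ratio $(\#B_j)^{1-q/p}u_{N_{j-1}}\to 0$, yet arranged so that the plateaus still force $w,\tilde{w}\in c_0$ and the active portions still force $w,\tilde{w}\notin\ell_1$, and all of this must be compatible with both sequences being genuinely nonincreasing across the boundaries. These constraints pull in opposite directions — long dormant plateaus help defeat domination but threaten both monotonicity and the $c_0$ condition — so the delicate part is verifying that a single recursive choice of the $N_j$ (for instance $N_j=2^{j^2}$ or similar super-geometric growth) satisfies all of them at once. Once the weights are pinned down, the domination of $(x_n\oplus\tilde{x}_n)$ is handed to us verbatim by Proposition \ref{lorentz-dominates-lp}, so essentially all the real work lives in the construction and in the single-block test computation.
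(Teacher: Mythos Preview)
Your overall plan---alternate which weight tracks the profile $u_n=n^{1/p-1/q}$ so that the pointwise sum stays above $u_n$---is exactly the paper's idea, but your specific construction has a fatal flaw and your test computation misuses the Lorentz norm.

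First, the construction. You hold the dormant weight constant at $u_{N_{j-1}}$, the value of $u$ at the \emph{start} of the block. Since $u$ is nonincreasing, this plateau value satisfies $u_{N_{j-1}}\geq u_n$ for every $n\in B_j$. On the active blocks you have $w_n=u_n$ exactly. Thus your $w$ satisfies $w_n\geq u_n$ for \emph{every} $n$, and the same holds for $\tilde w$. But then the very argument of Proposition~\ref{lorentz-dominates-lp} (applied with a single weight) shows that $(x_n)$ itself already $1$-dominates the $\ell_p$-basis, and likewise $(\tilde x_n)$---precisely the opposite of what you need. For the individual bases to fail to dominate $\ell_p$, the dormant weight must drop \emph{below} the profile $u_n$ on long stretches; merely freezing it is not enough. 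The paper achieves this by letting the dormant weight decay geometrically (like $2^{-j}w_{m_{k+1}}$) on its dormant interval, so that its total contribution there is bounded by $1$ regardless of the interval's length.

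Second, the test computation. For your vector $a=\mathbf 1_{B_j}$, the nonincreasing rearrangement is $a^*=(1,\ldots,1,0,\ldots)$ with $\#B_j$ ones \emph{in positions $1,\ldots,\#B_j$}, so
\[
\sum_n a_n^{*q}w_n=\sum_{n=1}^{\#B_j}w_n,
\]
which uses the weights at the \emph{beginning} of the sequence, not on $B_j$. Your bound ``at most $(\#B_j)\,u_{N_{j-1}}$'' confuses these. With your construction $w_n\geq u_n$, one has $\sum_{n=1}^{N}w_n\geq\sum_{n=1}^{N}u_n\sim cN^{1-s}$ with $s=\tfrac1q-\tfrac1p$, while $\|a\|_{\ell_p}^q=N^{q/p}=N^{1-qs}$; the ratio is $\gtrsim N^{(q-1)s}$, which is bounded away from $0$ (and tends to $\infty$ when $q>1$). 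So no choice of block lengths rescues the argument. The paper instead tests with the summing vectors $s_{n_k}$ and uses the geometric decay to bound $\|s_{n_{k+1}}\|_{d(w,q)}\leq\|s_{m_{k+1}}\|_{d(w,q)}+1$, then chooses $n_{k+1}$ large enough that this is $\leq (k+1)^{-1}\|s_{n_{k+1}}\|_{\ell_p}$.
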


\begin{proof}Put $s:=\frac{1}{q}-\frac{1}{p}$, and as usual, let $(s_n)\subset c_{00}$ denote the summing basis, i.e. the basis defined by
\[s_n=(\underbrace{1,1,\cdots,1}_{n\text{ times}},0,0,0,\cdots).\]

\noindent We will construct sequences $w$ and $\tilde{w}$ together with a sequence of indices
\[0=m_1<n_1<m_2<n_2<m_3<n_3<\cdots\]
inductively. For each $n$ we will have either $w_n = {1 \over n^{s}}$ or else $\tilde{w}_n= {1 \over n^{s}}$.  By Proposition \ref{lorentz-dominates-lp}, this will guarantee that $(x_n\oplus{x}_n)$ dominates the canonical basis of $\ell_p$.  Second, we will have $\norm{s_{n_k}}_{d(\tilde{w},q)}\leq k^{-1}\norm{s_{n_k}}_{\ell_p}$
 if $k$ is odd and $\norm{s_{n_k}}_{d(w,q)}\leq k^{-1}\norm{s_{n_k}}_{\ell_p}$ otherwise.
Thus, neither $(x_n)$ nor $(\tilde{x}_n)$ will dominate the canonical basis of $\ell_p$.

Begin by defining $n_1=1$ and $w_1=\tilde{w}_1=1$.  Notice that this means $\norm{s_{n_1}}_{d(\tilde{w},q)}\leq\norm{s_{n_1}}_{\ell_p}$ and $w_{n_1}= n_1^{-s}$.

For the inductive step, suppose $m_1<n_1<\cdots<m_k<n_k$, $(w_n)_{n=1}^{n_k}$, and $(\tilde{w}_n)_{n=1}^{n_k}$ have all been defined, and consider the case where $k+1$ is even.  Assume, in addition, that $w_{n_k}=n_k^{-s}$. Set $m_{k+1}>n_k$ to be any number such that $\tilde{w}_{n_k}\geq(m_{k+1}+1)^{-s}$.  Define $w_j = {1\over j^s}$ and $\tilde{w}_{j}=\tilde{w}_{n_k}$ for all $n_k<j\le m_{k+1}$.  
%Also define $w_{n_k+j}=(n_k+j)^{-s}$ for all $j=1,\cdots,m_{k+1}-n_k$.  
Due to the inductive hypothesis, we have $w_{n_k+1}<n_k^{-s}\leq w_{n_k}$ so that both sequences are nonincreasing as required. Also, by the choice of $m_{k+1}$, we have $\tilde{w}_{m_{k+1}}\geq(m_{k+1}+1)^{-s}$.  

Next, let $n_{k+1}>m_{k+1}$ be such that
\[\norm{s_{m_k}}_{d(w,q)}+1\leq{1\over k+1}\norm{s_{n_{k+1}}}_{\ell_p}.\]
Define $\tilde{w}_{j}={1 \over j^s}$ and $w_{j}={1 \over 2^{j}}w_{m_{k+1}}$ for all $m_{k+1}<j \le n_{k+1}$.  Then
\begin{multline*}\norm{s_{n_{k+1}}}_{d(w,q)}\leq\norm{s_{m_{k+1}}}_{d(w,q)}+\left(\sum_{j\ge 1}{w_{m_{k+1}}\over 2^j}\right)^{1\over q}
\leq\norm{s_{m_{k+1}}}_{d(w,q)}+1\leq{\norm{s_{n_{k+1}}}_{\ell_p}\over k+1}.\end{multline*}
The case where $k+1$ is odd we handle in a similar fashion.\end{proof}

We also need the following result from \cite{KPSTT12}.

\begin{proposition}[\cite{KPSTT12}, Lemma 4.10]\label{4.10}Let $1\leq q<\infty$ and $w=(w_n)\in c_0\setminus\ell_1$ be nonincreasing with $w_1=1$.  Let $I_{q,w}:\ell_q\to d(w,q)$ denote the formal identity between canonical bases.  Suppose $(x_n)$ is a seminormalized block basic sequence in $\ell_q$.  If $(I_{q,w}x_n)$ is seminormalized in $d(w,q)$ then it admits a subsequence equivalent to the canonical basis of $d(w,q)$.\end{proposition}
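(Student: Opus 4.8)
The plan is to recognize $(I_{q,w}x_n)$ as a seminormalized block basic sequence of the canonical (symmetric, $1$-unconditional) basis of $d(w,q)$, and then to invoke the classical block-basis dichotomy for Lorentz sequence spaces, using the hypotheses only to eliminate the undesired alternative. Write $y_n:=I_{q,w}x_n$. First I would record two elementary facts about the $d(w,q)$-norm. Since $w_1=1$ and $w$ is nonincreasing we have $w_i\le 1$, so $\norm{z}_{d(w,q)}\le\norm{z}_{\ell_q}$ for every finitely supported $z$; and for disjointly supported vectors the nonincreasing rearrangement of a sum interleaves those of the summands, whence, as $w$ is nonincreasing, $\norm{u+v}_{d(w,q)}^q\le\norm{u}_{d(w,q)}^q+\norm{v}_{d(w,q)}^q$. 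The blocks $(y_n)$ have successive disjoint supports (because $I_{q,w}$ is the formal identity and $(x_n)$ is a block basis), and $\norm{y_n}_{d(w,q)}\le\norm{x_n}_{\ell_q}\le C$; together with the standing hypothesis that $(y_n)$ is seminormalized in $d(w,q)$, this exhibits $(y_n)$ as a seminormalized block basic sequence of the canonical basis of $d(w,q)$.

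Next I would appeal to the block-basis dichotomy for Lorentz sequence spaces (see \cite[\S4.e]{LT77}): every seminormalized block basic sequence of the canonical basis of $d(w,q)$ has a subsequence equivalent either to the canonical basis of $d(w,q)$ or to the canonical basis of $\ell_q$. Passing to such a subsequence, it remains only to exclude the $\ell_q$ alternative, which I would do with a single rearrangement estimate. For any $N$, the vector $\sum_{n=1}^N y_n$ has all its coordinates bounded by $\max_n\norm{x_n}_{\ell_q}\le C$ and total $q$-mass $\sum_{n=1}^N\norm{x_n}_{\ell_q}^q\le NC^q$. Since $w$ is nonincreasing, the maximum of $\sum_i\gamma_i^q w_i$ subject to $\gamma_i\le C$ and $\sum_i\gamma_i^q\le NC^q$ is attained by concentrating the mass in the first $N$ coordinates, so that
\[\norm{\sum_{n=1}^N y_n}_{d(w,q)}^q=\sum_i\gamma_i^q w_i\le C^q\sum_{i=1}^N w_i,\]
where $(\gamma_i)$ denotes the nonincreasing rearrangement of the coordinates. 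As $w\in c_0$ we have $\sum_{i=1}^N w_i=o(N)$, hence $\norm{\sum_{n=1}^N y_n}_{d(w,q)}=o(N^{1/q})$. This contradicts equivalence to the $\ell_q$ basis, which would force a lower bound of order $N^{1/q}$. Therefore the chosen subsequence is equivalent to the canonical basis of $d(w,q)$.

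The genuinely substantial ingredient here is the dichotomy itself, which I am content to cite; this is why I regard the invocation of \cite[\S4.e]{LT77} as carrying the weight of the argument. Producing a self-contained proof would require the upper $d(w,q)$-estimate $\norm{\sum c_n y_n}_{d(w,q)}\le B\norm{(c_n)}_{d(w,q)}$, and \emph{this is the main obstacle}: the cheap subadditivity above only yields the weaker $\ell_q$-upper bound $\norm{\sum c_n y_n}_{d(w,q)}\le C\norm{(c_n)}_{\ell_q}$, and the naive ``bathtub'' estimate used for the all-ones coefficients above is too lossy for general coefficients. One would instead pass to a further subsequence along which the normalized coordinate profiles of the blocks converge pointwise to a common nonincreasing profile (a diagonal argument, using that $b_{n,1}^*=\norm{y_n}_\infty$ stays bounded and that $\sum_i(b_{n,i}^*)^q\le C^q$), reducing matters to blocks with one common profile, for which the upper estimate follows from a rank-counting comparison that exploits the monotonicity of $w$.

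By contrast, the lower $d(w,q)$-estimate is easy, and I would mention it as the intuition behind why the $\ell_q$ alternative cannot occur. Seminormalization in both norms forces $\liminf_n\norm{y_n}_\infty>0$: otherwise, along a subsequence, filling the first $(C/\norm{y_n}_\infty)^q$ coordinates at the cap gives $\norm{y_n}_{d(w,q)}^q\le\norm{y_n}_\infty^q\sum_{i\le(C/\norm{y_n}_\infty)^q}w_i=C^q\,o(1)\to 0$, contradicting that $(y_n)$ is bounded below. Keeping one dominant coordinate per block and using $1$-unconditionality of the $d(w,q)$ basis then already yields $\norm{\sum c_n y_n}_{d(w,q)}\ge\mu\norm{(c_n)}_{d(w,q)}$ for a suitable $\mu>0$, which is exactly the domination incompatible with the $\ell_q$ basis.
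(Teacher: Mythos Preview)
The paper does not supply its own proof of this proposition; it is quoted verbatim from \cite[Lemma~4.10]{KPSTT12} and used as a black box. So there is no in-paper argument to compare against.

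Your proposed argument is correct. The two essential steps---invoking the Altshuler--Casazza--Lin dichotomy recorded in \cite[\S4.e]{LT77} (every seminormalized block basic sequence of the canonical basis of $d(w,q)$ has a subsequence equivalent either to that basis or to the $\ell_q$ basis), and then eliminating the $\ell_q$ alternative via the ``bathtub'' estimate $\bigl\|\sum_{n=1}^N y_n\bigr\|_{d(w,q)}^q\le C^q\sum_{i=1}^N w_i=o(N)$---are both sound. The bathtub bound is exactly where you use that the blocks come from $\ell_q$ via the formal identity (so that their coordinates are uniformly capped and their total $q$-mass is controlled), and this is what the bare dichotomy cannot supply on its own. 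This is in fact the same line of reasoning used in \cite{KPSTT12}.

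Two small comments. First, the final paragraph's ``easy lower estimate'' $\|\sum c_n y_n\|_{d(w,q)}\ge\mu\|(c_n)\|_{d(w,q)}$ is correct and nice intuition, but by itself it does \emph{not} exclude the $\ell_q$ alternative (since $\|(c_n)\|_{d(w,q)}\le\|(c_n)\|_{\ell_q}$); the all-ones bathtub estimate is what actually does the work, so you should not advertise the lower bound as ``incompatible with the $\ell_q$ basis.'' Second, everything after the first two paragraphs is commentary on how one might reprove the dichotomy from scratch; it is fine as discussion but not part of the proof proper, and could be trimmed.
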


\begin{proposition}\label{identity-is-WS}Let $1<q<p<\infty$ and $1\leq\xi\leq\omega_1$, and suppose $(e_n)$ is the canonical basis of $\ell_p$.  Let $d(w,q)$ be a Lorentz sequence space whose canonical basis fails to dominate the canonical basis of $\ell_p$.  Then the formal identity $I_{q,w}:\ell_q\to d(w,q)$ is class $\mathcal{WS}_{e,\xi}$\end{proposition}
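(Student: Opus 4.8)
The plan is to show that $I_{q,w}$ is $(e_n)$-singular by contradiction: suppose some normalized basic sequence $(x_n)$ in $\ell_q$ has image $(I_{q,w}x_n)$ that $\mathcal{S}_\xi$-dominates (in fact dominates, since we may as well prove the strongest case) the canonical basis $(e_n)$ of $\ell_p$, and derive that the canonical basis of $d(w,q)$ would then itself dominate $(e_n)$, contradicting the hypothesis. Since $\mathcal{S}_\xi$-domination for any $\xi$ is implied by full domination and we want to rule it out for all $\xi\le\omega_1$, the natural strategy is to produce, from any $(x_n)$ whose image dominates $(e_n)$ even in the weak $\mathcal{S}_\xi$ sense, a block sequence to which Proposition \ref{4.10} applies.

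First I would reduce to a block basic sequence. Given a normalized basic sequence $(x_n)$ in $\ell_q$ with $(I_{q,w}x_n)\geq(e_n)$ (the $\xi=\omega_1$ case; the general $\mathcal{S}_\xi$ case follows since domination on all of $[\mathbb N]^{<\omega}$ is the strongest requirement, and the sequence $(e_n)$ of $\ell_p$, being subsymmetric, lets us pass to subsequences freely), I would pass to a subsequence that is equivalent to a seminormalized block basic sequence $(u_n)$ in $\ell_q$ via the standard gliding-hump / small-perturbation argument (using that $\ell_q$ has a shrinking unconditional basis, any normalized weakly null sequence has a subsequence equivalent to a block basis; and a normalized basic sequence in $\ell_q$ cannot have an $\ell_1$-like subsequence since $q>1$, so after passing to a subsequence it is weakly null). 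The image $(I_{q,w}u_n)$ remains seminormalized in $d(w,q)$: it is bounded since $I_{q,w}$ is bounded, and bounded below because it $\mathcal{S}_\xi$-dominates the seminormalized $(e_n)$.

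Next I would apply Proposition \ref{4.10} to the seminormalized block basic sequence $(u_n)$: since $(I_{q,w}u_n)$ is seminormalized in $d(w,q)$, it admits a subsequence equivalent to the canonical basis of $d(w,q)$. Along this subsequence, $(I_{q,w}u_n)$ still $\mathcal{S}_\xi$-dominates $(e_n)$ (after relabelling, using the spreading and subsequence-stability of Schreier domination). Therefore the canonical basis of $d(w,q)$, being equivalent to this subsequence, itself $\mathcal{S}_\xi$-dominates the canonical basis $(e_n)$ of $\ell_p$. But the canonical basis of $d(w,q)$ is symmetric, and for symmetric (indeed subsymmetric) bases, $\mathcal{S}_\xi$-domination of a subsymmetric basis upgrades to full domination; this contradicts the hypothesis that the canonical basis of $d(w,q)$ fails to dominate the canonical basis of $\ell_p$.

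The main obstacle I expect is the passage, for general $\xi<\omega_1$, from mere $\mathcal{S}_\xi$-domination back to genuine domination so as to contradict the hypothesis, which is phrased in terms of ordinary domination. The clean resolution uses symmetry of the $d(w,q)$ basis: because both $(e_n)$ and the $d(w,q)$ basis are symmetric, an $\mathcal{S}_\xi$-domination inequality holds in particular for all singletons and, by symmetry and spreading, can be propagated to arbitrary finite supports — so $\mathcal{S}_\xi$-domination between symmetric bases is equivalent to full domination. I would want to verify this upgrade carefully (it is where the symmetry of the Lorentz basis is essential and where a naive argument might stall), since it is the linchpin that lets the weak $\mathcal{S}_\xi$ hypothesis contradict the strong domination assumption.
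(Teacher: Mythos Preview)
Your approach is essentially the contrapositive of the paper's direct argument, and uses the same three ingredients: reduce to a block sequence in $\ell_q$ via reflexivity and the gliding-hump/small-perturbation principle; invoke Proposition~\ref{4.10} to pass to a subsequence whose image is equivalent to the canonical basis of $d(w,q)$; and exploit the symmetry of the Lorentz and $\ell_p$ bases to mediate between $\mathcal{S}_\xi$-domination and full domination. The paper does this last step in the forward direction (any witness $(c_k)$ to the failure of domination can, by symmetry, be taken with support in $\mathcal{S}_1\subseteq\mathcal{S}_\xi$), whereas you do it in reverse (an $\mathcal{S}_\xi$-domination between symmetric bases upgrades to full domination). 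Both are correct and equally short.

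One expository point: your parenthetical ``in fact dominates, since we may as well prove the strongest case'' and the remark that ``the general $\mathcal{S}_\xi$ case follows'' from $\xi=\omega_1$ are backwards. Since $\mathcal{WS}_{e,\xi}\subseteq\mathcal{WS}_{e,\zeta}$ for $\xi\leq\zeta$, assuming \emph{full} domination of $(e_n)$ by $(I_{q,w}x_n)$ is the \emph{strongest} hypothesis and only yields the \emph{weakest} conclusion $I_{q,w}\in\mathcal{WS}_{e,\omega_1}$. For the general $\xi$ you must start from the weaker assumption of mere $\mathcal{S}_\xi$-domination, which is exactly what you do in your final paragraph; just delete the misleading earlier reduction. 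You should also check that $\mathcal{S}_\xi$-domination survives the perturbation from $(x_{n_k})$ to the block sequence $(u_k)$: since $(I_{q,w}u_k)$ is a seminormalized block sequence (hence basic with controlled constant) and $\|I_{q,w}(u_k-x_{n_k})\|\to 0$, a further subsequence makes the Principle of Small Perturbations apply, giving equivalence of $(I_{q,w}u_k)$ and $(I_{q,w}x_{n_k})$ and hence preserving the domination.
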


\begin{proof}Fix any $\epsilon>0$ and consider a normalized basic sequence $(x_n)_{n=1}^\infty$ in $\ell_q$.  Since every seminormalized basic sequence in a reflexive space is weakly null (as being shrinking), by the Bessaga-Pe\l czy\'{n}ski Selection Principle, we can find a subsequence $(x_{n_k})$ and successive finite subsets of $\mathbb{N}$ which we denote
\[E_1<E_2<E_3<\cdots\]
so that $(E_kx_{n_k})$ is a seminormalized block basic sequence in $\ell_q$, and
\[z_k:=x_{n_k}-E_kx_{n_k}\]
satisfies $\norm{z_k}_{\ell_q}\to 0$. By Proposition \ref{WS-compact}, we may assume that the $d(w,q)$-block sequence $(I_{q,w}E_kx_{n_k})_{k=1}^\infty$ is seminormalized. Thus, we can pass to a further subsequence if necessary so that by Proposition \ref{4.10}, $(I_{q,w}E_kx_{n_k})_{k=1}^\infty$ is $K$-equivalent to the canonical basis of $d(w,q)$, where $K\geq 1$.  Pass to a still further subsequence so that $\norm{I_{q,w}z_k}_{d(w,q)}\leq 2^{-k-1}\epsilon$.  Now, since the canonical basis of $d(w,q)$ fails to dominate the canonical basis of $\ell_p$, we can find $(c_n)\in c_{00}$ such that
\[\norm{(c_k)}_{d(w,q)}<\frac{\epsilon}{2K}\norm{(c_k)}_{\ell_p}.\]
By the symmetric property of the canonical basis of $d(w,q)$ we can assume $\text{supp}(c_k)\in\mathcal{S}_1\subseteq\mathcal{S}_\xi$.  Then
\begin{multline*}\norm{\sum c_kI_{q,w}x_{n_k}}_{d(w,q)}\leq\norm{\sum c_kI_{q,w}E_kx_{n_k}}_{d(w,q)}+\sum\norm{c_kI_{q,w}z_k}_{d(w,q)}\\\\\leq K\norm{(c_k)}_{d(w,q)}+\frac{\epsilon}{2}\norm{(c_k)}_{\ell_\infty}<\frac{\epsilon}{2}\norm{(c_k)}_{\ell_p}+\frac{\epsilon}{2}\norm{(c_k)}_{\ell_\infty}\leq\epsilon\norm{(c_k)}_{\ell_p}.\end{multline*}\end{proof}

We are now ready to prove the main result of this section.

\begin{theorem}\label{WS-not-closed-under-addition}Let $e=(e_n)$ denote the canonical basis of $\ell_p$, $1<p<\infty$, and let $1\leq\xi\leq\omega_1$ be an ordinal.  Then class $\mathcal{WS}_{e,\xi}$ fails to be closed under addition, and hence is not an operator ideal.\end{theorem}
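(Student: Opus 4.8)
The plan is to assemble the machinery of this section into a single counterexample, choosing the domain and codomain so that two $(e_n)$-singular operators add up to one that is not. First I would fix any $q$ with $1<q<p$ (possible since $p>1$) and apply Proposition \ref{alternating-pair} to produce nonincreasing weights $w,\tilde w\in c_0\setminus\ell_1$ with $w_1=\tilde w_1=1$ such that the canonical bases $(x_n)$ of $d(w,q)$ and $(\tilde x_n)$ of $d(\tilde w,q)$ individually fail to dominate the $\ell_p$-basis $(e_n)$, while their $\ell_q$-direct sum $(x_n\oplus\tilde x_n)$ $1$-dominates $(e_n)$ inside $X:=\left(d(w,q)\oplus d(\tilde w,q)\right)_{\ell_q}$. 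With these in hand I set the domain to be $W:=(\ell_q\oplus\ell_q)_{\ell_q}$ and define
\[S:=I_{q,w}\oplus 0\qquad\text{and}\qquad T:=0\oplus I_{q,\tilde w},\]
both regarded as operators from $W$ into $X$, where $I_{q,w}\colon\ell_q\to d(w,q)$ and $I_{q,\tilde w}\colon\ell_q\to d(\tilde w,q)$ are the formal identities.

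To see that $S,T\in\mathcal{WS}_{e,\xi}(W,X)$, note that since the two Lorentz bases fail to dominate $(e_n)$, Proposition \ref{identity-is-WS} gives $I_{q,w}\in\mathcal{WS}_{e,\xi}(\ell_q,d(w,q))$ and $I_{q,\tilde w}\in\mathcal{WS}_{e,\xi}(\ell_q,d(\tilde w,q))$. The canonical basis of $\ell_p$ meets the difference-sequence hypothesis required to invoke Proposition \ref{WS-direct-sum}: the vectors $e_{4n+2}-e_{4n}$ are disjointly supported blocks of norm $2^{1/p}$, so $\norm{\sum a_n(e_{4n+2}-e_{4n})}_{\ell_p}=2^{1/p}\norm{\sum a_ne_n}_{\ell_p}$ and hence $(e_{4n+2}-e_{4n})\geq_1(e_n)$. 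Applying both halves of Proposition \ref{WS-direct-sum} then yields $S\in\mathcal{WS}_{e,\xi}(W,X)$ and $T\in\mathcal{WS}_{e,\xi}(W,X)$.

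Finally I would exhibit a single normalized basic sequence whose image under $S+T=I_{q,w}\oplus I_{q,\tilde w}$ dominates $(e_n)$. Letting $(f_n)$ be the canonical basis of $\ell_q$, put $u_n:=2^{-1/q}(f_n,f_n)\in W$; this is isometric to $(f_n)$ and so is normalized and basic, and
\[(S+T)u_n=2^{-1/q}(I_{q,w}f_n,I_{q,\tilde w}f_n)=2^{-1/q}(x_n\oplus\tilde x_n).\]
By Proposition \ref{alternating-pair} the sequence $(x_n\oplus\tilde x_n)$ dominates $(e_n)$, and rescaling by the constant $2^{-1/q}$ preserves this, so $((S+T)u_n)$ dominates $(e_n)$. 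Since full (i.e.\ $\mathcal{S}_{\omega_1}$) domination demands the defining inequality over all finite supports, it holds a fortiori over supports lying in $\mathcal{S}_\xi$; thus $((S+T)u_n)$ $\mathcal{S}_\xi$-dominates $(e_n)$ for every $1\leq\xi\leq\omega_1$. Hence $S+T\notin\mathcal{WS}_{e,\xi}(W,X)$, so the class is not closed under addition, and since the components of an operator ideal must be linear subspaces, $\mathcal{WS}_{e,\xi}$ cannot be an operator ideal.

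The substantive content is carried entirely by the earlier results, above all Proposition \ref{alternating-pair} (and Proposition \ref{identity-is-WS}); the present argument is pure assembly. The only points demanding any care are matching the $\ell_q$-direct-sum conventions so that Proposition \ref{WS-direct-sum} applies verbatim, verifying the difference-sequence domination for the $\ell_p$-basis, and observing that full domination passes down to $\mathcal{S}_\xi$-domination, all of which are routine.
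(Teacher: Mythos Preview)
Your proof is correct and follows exactly the paper's approach: the same choice of $q\in(1,p)$, the same Lorentz weights from Proposition~\ref{alternating-pair}, and the same pair of operators $I_{q,w}\oplus 0$ and $0\oplus I_{q,\tilde w}$, with you additionally writing out the explicit witness sequence $u_n=2^{-1/q}(f_n,f_n)$ that the paper leaves implicit. One cosmetic slip: the hypothesis of Proposition~\ref{WS-direct-sum} is $(e_{2n+2}-e_{2n})\geq(e_n)$ rather than $(e_{4n+2}-e_{4n})\geq(e_n)$, but your block-sequence computation works verbatim for either.
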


\begin{proof}Let $q\in(1,p)$ and choose $w$ and $\tilde{w}$ as in Proposition \ref{alternating-pair} so that neither of the respective canonical bases $(x_n)$ or $(\tilde{x}_n)$ of $d(w,q)$ and $d(\tilde{w},q)$ dominates the canonical basis of $\ell_q$, but their $\ell_q$-direct sum $(x_n\oplus\tilde{x}_n)$ does.  Let $I_{q,w}:\ell_q\to d(w,q)$ and $I_{q,\tilde{w}}:\ell_q\to d(\tilde{w},q)$ denote the formal identity operators.  Then by Propositions \ref{WS-direct-sum} and \ref{identity-is-WS},
\[I_{q,w}\oplus 0:\ell_q\oplus\ell_q\to d(w,q)\oplus d(\tilde{w},q)\;\;\;\text{ and }\;\;\;0\oplus I_{q,\tilde{w}}:\ell_q\oplus\ell_q\to d(w,q)\oplus d(\tilde{w},q)\]
are both class $\mathcal{WS}_{e,\xi}$.  However, since $(x_n\oplus\tilde{x}_n)$ dominates the canonical basis of $\ell_p$, their sum
\[I_{q,w}\oplus 0 + 0\oplus I_{q,\tilde{w}}=I_{q,w}\oplus I_{q,\tilde{w}}\]
is not class $\mathcal{WS}_{e,\xi}$.\end{proof}

\section{Closed ideals in $\mathcal{L}(\ell_1\oplus\ell_q)$, $1<q<\infty$, and $\mathcal{L}(\ell_1\oplus c_0)$}

For many years, researchers have been interested in discovering whether or not, given a particular Banach space $X$, the operator algebra $\mathcal{L}(X)$ admits infinitely many closed ideals.  In the case of many classical Banach spaces, this has long been decided.  For instance, in 1960 it was shown that $\mathcal{L}(\ell_p)$, $1\leq p<\infty$, and $\mathcal{L}(c_0)$ admit exactly three closed ideals (\cite{GMF67}).  This also took care of the case $\mathcal{L}(L_2)$, since $L_2\cong\ell_2$.  By 1978 it was discovered that $\mathcal{L}(L_p)$ admits infinitely many closed ideals for $p\in(1,2)\cup(2,\infty)$ (\cite[Theorem 5.3.9]{Pi78}), and in 2015 this was improved to show continuum many (\cite[Theorem 1.1]{Wa15}).  Also in 1978 was shown that $\mathcal{L}(C[0,1])$ admits uncountably many closed ideals (\cite[Theorem 5.3.11]{Pi78}).  Whether $\mathcal{L}(L_1)$ and $\mathcal{L}(L_\infty)\cong\mathcal{L}(\ell_\infty)$ admit infinitely many closed ideals remains a significant open question.

Besides these classical cases, the closed ideal structures of $\mathcal{L}(\ell_p\oplus\ell_q)$, $1\leq p<q<\infty$, have generated a great deal of interest.  Although Pietsch asked as early as 1978 whether these operator algebras admit infinitely many closed ideals (\cite[Problem 5.33]{Pi78}), the question remained entirely open for over 36 years.  Indeed, not until 2014 was it finally shown that $\mathcal{L}(\ell_p\oplus\ell_q)$ admits continuum many closed ideals whenever $1<p<q<\infty$ (\cite{SZ14}).  Then, in 2015 was shown that this result extends to $\mathcal{L}(\ell_p\oplus c_0)$ and $\mathcal{L}(\ell_1\oplus\ell_q)$ in the special cases $1<p<2<q<\infty$ (\cite[Theorem 1.1]{Wa15}).  

In this section we close Pietsch's question by proving the following.

\begin{theorem}\label{main}Suppose $X$ is a Banach space containing a complemented copy of $\ell_1$, and a complemented copy either of $\ell_q$, $1<q\leq\infty$, or of $c_0$.  Then $\mathcal{L}(X)$ admits an uncountable chain of closed ideals.  The same is true if $X$ contains a complemented copy of $\ell_p$, $1\leq p<\infty$, and of $\ell_\infty$.  In particular, $\mathcal{L}(\ell_1\oplus\ell_q)$, $1<q\leq\infty$, and $\mathcal{L}(\ell_1\oplus c_0)$ each admit an uncountable chain of closed ideals.\end{theorem}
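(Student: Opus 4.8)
The plan is to reduce the general statement to a single key construction—an uncountable chain of closed ideals in $\mathcal{L}(\ell_1 \oplus E)$ where $E$ is one of $\ell_q$ ($1 < q \leq \infty$) or $c_0$—and then transfer this chain to $\mathcal{L}(X)$ via the complemented-subspace hypothesis. First I would observe that if $X$ contains complemented copies of $\ell_1$ and of $E$, then $\ell_1 \oplus E$ is itself (isomorphic to) a complemented subspace of $X$, so there is a projection $P \in \mathcal{L}(X)$ with range isomorphic to $\ell_1 \oplus E$. Given any closed ideal $\mathcal{I}$ in $\mathcal{L}(\ell_1 \oplus E)$, one pulls it back to $\mathcal{L}(X)$ by setting $\widetilde{\mathcal{I}} := \{ T \in \mathcal{L}(X) : J T' J^{-1} \in \mathcal{I} \text{ for the compression } T' = P T P \}$, or more cleanly via the standard fact that closed ideals of $\mathcal{L}(W)$ inject into closed ideals of $\mathcal{L}(X)$ whenever $W$ is complemented in $X$ (using the inclusion and projection maps to conjugate). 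Since conjugation by a fixed isomorphism and sandwiching by a projection both preserve inclusions strictly when done carefully, an uncountable chain upstairs produces an uncountable chain downstairs. The case where $X$ contains complemented $\ell_p$ ($1 \leq p < \infty$) and $\ell_\infty$ is handled by duality: passing to adjoints sends $\ell_p$ to $\ell_{p'}$-type and $\ell_\infty$ to an $\ell_1$-factor, so it reduces to the first case applied in the dual, and closed ideals correspond under the adjoint map.

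The heart of the matter, then, is constructing the uncountable chain in $\mathcal{L}(\ell_1 \oplus E)$ itself, and here is where I expect to use the new classes $\mathcal{JS}_{e,\xi}$ promised in the introduction. The idea is to let $e = (e_n)$ be the canonical basis of $\ell_1$ and consider, for each countable ordinal $\xi$, the component $\mathcal{JS}_{e,\xi}(\ell_1 \oplus E)$ (or a closely related ideal built from it). Because the paper will have established that these classes, suitably defined, do have the ideal property and are norm-closed—unlike the $\mathcal{WS}_{e,\xi}$ classes, which Theorem \ref{WS-not-closed-under-addition} shows can fail to be closed under addition—each such component is a genuine closed ideal of $\mathcal{L}(\ell_1 \oplus E)$. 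The Schreier hierarchy $\mathcal{S}_\xi$, indexed by $\xi < \omega_1$, then supplies a natural candidate for an uncountable family of ideals $(\mathcal{JS}_{e,\xi})_{\xi < \omega_1}$.

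To turn this family into a genuinely uncountable \emph{chain} (totally ordered by inclusion, with uncountably many distinct members), two things must be verified, and I expect the \emph{strictness of the inclusions} to be the main obstacle. Monotonicity—that $\mathcal{JS}_{e,\xi} \subseteq \mathcal{JS}_{e,\zeta}$ for $\xi \leq \zeta$, at least after the index-shifting permitted by the comparison facts for Schreier families recalled in the introduction (the constant $d(\xi,\zeta)$ and Proposition \ref{1.iii})—should follow from the fact that $\mathcal{S}_\xi$-domination is harder to achieve for larger $\xi$, so fewer operators are singular. The difficult direction is separating the ideals: for uncountably many pairs $\xi < \zeta$ one must exhibit an operator in $\mathcal{JS}_{e,\zeta} \setminus \mathcal{JS}_{e,\xi}$. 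This is precisely where the descriptive set-theoretic result from \cite{BF11} alluded to in the introduction enters: it should guarantee, for each $\xi$, the existence of a normalized basic sequence in $\ell_1 \oplus E$ whose image under an appropriate operator $\mathcal{S}_{\zeta}$-dominates $(e_n)$ but fails to $\mathcal{S}_\xi$-dominate it, the sharp ordinal threshold being detectable because the relevant "domination index" is a genuine $\omega_1$-valued invariant rather than collapsing at a countable stage. Concretely I would build, for each $\xi$, a shift-like or summing-type operator on the $\ell_1$-factor whose Schreier-domination behavior is calibrated exactly to level $\xi$, using Propositions \ref{2x} and \ref{1.iii} to control how the Schreier families interact. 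Assembling these witnesses across all $\xi < \omega_1$ yields the strictness, and hence the uncountable chain.
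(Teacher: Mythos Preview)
Your high-level architecture is right, but two concrete ingredients are missing and one is wrong. First, the classes $\mathcal{JS}_{e,\xi}$ for countable $\xi$ are \emph{not} known to be closed under addition---only $\mathcal{JS}_{e,\omega_1}$ is (Theorem~\ref{JS-ideal}); Proposition~\ref{JS-Schreier-addition} gives merely $S+T\in\mathcal{JS}_{e,\xi+\zeta}$. The paper therefore does not use $\mathcal{JS}_{e,\xi}$ itself as an ideal but sets $\mathcal{J}_\xi:=\overline{\bigcup_k\mathcal{JS}_{e,\xi k}(\ell_p,\ell_\infty)}$, which \emph{is} a closed subideal and is trapped inside $\mathcal{JS}_{e,\zeta}$ for $\zeta=\sup_k\xi k$. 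Note also that the paper takes $e$ to be the canonical $\ell_p$-basis and works with subideals of $\mathcal{L}(\ell_p,\ell_\infty)$, dualizing to reach $\mathcal{L}(\ell_1,Z)$ and then applying Proposition~\ref{complemented-injection}, rather than staying inside $\mathcal{L}(\ell_1\oplus E)$ with $e$ the $\ell_1$-basis as you propose.

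Second, the separating operators are not ``shift-like or summing-type'' maps; such operators do not have Schreier behavior calibrated to a prescribed ordinal. The key witnesses are the formal inclusions $j_\zeta:\ell_p\to T_\zeta^p$ into the $p$-convexified Tsirelson spaces of order $\zeta$: the Tsirelson basis $\mathcal{S}_\zeta$-dominates the $\ell_p$-basis, so (after composing with an embedding into $\ell_\infty$) the resulting operator is not in $\mathcal{JS}_{e,\zeta}\supseteq\mathcal{J}_\xi$, while $T_\zeta^p$ contains no copy of $\ell_p$, so $j_\zeta\in\mathcal{JS}_{e,\omega_1}=\mathcal{WS}_{e,\omega_1}$ by Propositions~\ref{contains-lp} and~\ref{JS-WS}. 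The Beanland--Freeman result (Theorem~\ref{corollary-19}) then plays the opposite role to what you describe: it is not used to \emph{produce} the witnessing sequence but to guarantee that this operator already lies in some $\mathcal{WS}_{e,\alpha}$ with countable $\alpha$, hence in $\mathcal{J}_\alpha$, so that the transfinite climb can continue. (As a minor point, your monotonicity heuristic is inverted: larger $\xi$ means more Schreier sets to test, so failure of $\mathcal{S}_\xi$-domination is \emph{easier} and \emph{more} operators lie in $\mathcal{JS}_{e,\xi}$; the inclusion you wrote down is nonetheless correct.)
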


\noindent Unfortunately, the cases $\mathcal{L}(\ell_p\oplus c_0)$ fail to dualize, and remain open for $2\leq p<\infty$.

Note that in addition to the above operator algebras, we can also close some of the remaining cases for Rosenthal's $X_p$ spaces and Woo's $X_{p,r}$ generalizations thereof.  Let us take a moment to recall the definitions of these spaces.  Pick any $1\leq r<p\leq\infty$.  Let $(f_n)$ and $(g_n)$ denote the respective canonical bases of $\ell_p$ (or $c_0$ if $p=\infty$) and $\ell_r$.  Let $(w_n)\in c_0$ be any sequence of positive numbers tending to zero, and satisfying the condition that $\sum w_n^{pr/(p-r)}=\infty$ (or $\sum w_n^r=\infty$ if $p=\infty$).  Set $e_n=f_n\oplus_\infty w_ng_n$, vectors lying in $\ell_p\oplus_\infty\ell_r$ (or $c_0\oplus_\infty\ell_r$ if $p=\infty$).  Then we can define Woo's spaces $X_{p,r}=[e_n]$.  Rosenthal's spaces are just special cases of Woo's spaces, and we may define them for any $p\in[1,2)\cup(2,\infty]$ by setting $X_p=X_{p,2}$ if $p\in(2,\infty]$ and $X_p=X_{p',2}^*$, $\frac{1}{p}+\frac{1}{p'}=1$, if $p\in[1,2)$.  It had previously been observed that by \cite[Theorem 1.1]{Wa15} the operator algebra $\mathcal{L}(X_{p,r})$ admits continuum many closed ideals whenever $1<r<p<\infty$, or whenever $p=\infty$ and $r\in(1,2)$.  Indeed, so do their dual space algebras $\mathcal{L}(X_{p,r}^*)$, for the same choices of $r$ and $p$.  Note that this also gives continuum many closed ideals in $\mathcal{L}(X_p)$ and $\mathcal{L}(X_p^*)$ whenever $p\in(1,2)\cup(2,\infty)$.  However, due to the fact that $X_{p,r}$ always contains complemented copies of $\ell_p$ (or $c_0$, if $p=\infty$) and $\ell_r$ (cf. \cite[Corollary 3.2]{Woo75}), by Theorem \ref{main} we now have uncountably many closed ideals in $\mathcal{L}(X_{p,1})$ and $\mathcal{L}(X_{p,1}^*)$ for all choices of $1<p\leq\infty$, and $\mathcal{L}(X_{\infty,r}^*)$ for all $1\leq r<\infty$.  So do $\mathcal{L}(X_{\infty,1})$, $\mathcal{L}(X_1)$, $\mathcal{L}(X_1^*)$, and $\mathcal{L}(X_\infty^*)$.  Among Woo's and Rosenthal's spaces and their duals, this leaves open only the cases $\mathcal{L}(X_{\infty,r})$, $r\in[2,\infty)$, and $\mathcal{L}(X_\infty)$.

We will prove Theorem \ref{main} by modifying the definition of classes $\mathcal{WS}_{e,\xi}$, and using them to produce uncountable chains of closed subideals in certain operator algebras.  The new classes are as follows.

\begin{definition}Let $X$ and $Y$ be Banach spaces, and let $1\leq\xi\leq\omega_1$ be an ordinal.  Fix any normalized basis $e=(e_n)$.  We define $\mathcal{JS}_{e,\xi}(X,Y)$ as the set of all operators $T\in\mathcal{L}(X,Y)$ such that for any normalized basic sequence $(x_n)$ in $X$ satisfying $(x_n)\leq(e_n)$, the image sequence $(Tx_n)$ fails to $\mathcal{S}_\xi$-dominate $(e_n)$.\end{definition}

\noindent So, we have weakened the definition of class $\mathcal{WS}_{e,\xi}$ by considering only those normalized basic sequences which are dominated by $(e_n)$.  This will ensure that we can get closure under addition in the non-Schreier cases, that is, for classes $\mathcal{JS}_{e,\omega_1}$.

Note that due to the strength of the $\ell_1$ norm, every normalized basic sequence is dominated by the canonical basis of $\ell_1$, and so in case $e=(e_n)$ is the canonical basis for $\ell_1$ we have
\[\mathcal{R}_\xi=\mathfrak{SM}_1^\xi=\mathcal{WS}_{e,\xi}=\mathcal{JS}_{e,\xi},\;\;\;1\leq\xi\leq\omega_1.\]
Here, $\mathcal{R}_{\omega_1}=\mathcal{R}$ denotes the Rosenthal operators, and $\mathcal{R}_\xi=\mathcal{WS}_{e,\xi}$, $1\leq\xi<\omega_1$, denotes the $\xi$th-order {\it Schreier Rosenthal} operators, defined in \cite{BF11}; classes $\mathfrak{SM}_1^\xi$, $1\leq\xi\leq\omega_1$, were defined in \cite[\S4]{BCFW15}.  Hence, each of these forms a norm-closed operator ideal by \cite[Theorem 4.3]{BCFW15}.  However, if $e=(e_n)$ is the canonical basis for $\ell_p$, $1<p<\infty$, or $c_0$, then we do not yet know whether $\mathcal{JS}_{e,\xi}$ is closed under addition for any $1\leq\xi<\omega_1$.

Let us now observe some straightforward consequences of the definition of $\mathcal{JS}_{e,\xi}$.

\begin{proposition}\label{property-list}Let $X$ and $Y$ be Banach spaces, let $1\leq\xi\leq\omega_1$, and let $e=(e_n)$ be the canonical basis for $\ell_p$, $1\leq p<\infty$, or $c_0$.
\begin{enumerate}\item\label{bounded-sequence}  An operator $T\in\mathcal{L}(X,Y)$ is class $\mathcal{JS}_{e,\xi}$ just in case for every bounded sequence $(x_n)$ in $X$ which is dominated by $(e_n)$, and every $\epsilon>0$, there exists $(a_n)\in c_{00}$ and $F\in\mathcal{S}_\xi$ such that $\norm{\sum a_nTx_n}<\epsilon\norm{\sum a_ne_n}$.
\item\label{JS-inclusion}  If $\zeta$ is another ordinal with $1\leq\xi\leq\zeta\leq\omega_1$, then
\[\mathcal{JS}_{e,\xi}(X,Y)\subseteq\mathcal{JS}_{e,\zeta}(X,Y).\]
\item\label{JS-compact}  Every compact operator in $\mathcal{L}(X,Y)$ is class $\mathcal{JS}_{e,\xi}$.  In other words,
\[\mathcal{K}(X,Y)\subseteq\mathcal{JS}_{e,\xi}(X,Y).\]
\item\label{JS-embedding}  Suppose $Z$ is also a Banach space, $T\in\mathcal{L}(X,Y)$ is an operator, and $J:Y\to Z$ is a continuous linear embedding.  If $T\notin\mathcal{JS}_{e,\xi}(X,Y)$ then $JT\notin\mathcal{JS}_{e,\xi}(X,Z)$.
\item\label{JS-closed}  $\mathcal{JS}_{e,\xi}(X,Y)$ is a norm-closed subset of $\mathcal{L}(X,Y)$.
\item\label{JS-multiplicative-ideal}  Suppose $W$ and $Z$ are also Banach spaces.  If $T\in\mathcal{JS}_{e,\xi}(X,Y)$, $A\in\mathcal{L}(W,X)$, and $B\in\mathcal{L}(Y,Z)$, then $BTA\in\mathcal{JS}_{e,\xi}(W,Z)$.\end{enumerate}\end{proposition}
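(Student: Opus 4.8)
The plan is to verify each of the six properties in Proposition \ref{property-list} in turn, leaning heavily on the machinery already developed for class $\mathcal{WS}_{e,\xi}$, since the definition of $\mathcal{JS}_{e,\xi}$ differs only by restricting attention to normalized basic sequences $(x_n)$ that are dominated by $(e_n)$.

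For item \eqref{bounded-sequence}, I would adapt the proof of Proposition \ref{seminormalized}. Given a bounded sequence $(x_n)$ dominated by $(e_n)$, I pass to the difference sequence $z_k = x_{n_{4k+2}} - x_{n_{4k}}$ via Proposition \ref{difference-sequence}; the key point is that $(z_k)$, being a difference of a subsequence of $(x_n)$, remains dominated by $(e_{4k+2}-e_{4k})$ and hence (using the hypothesis implicit for these bases, that $(e_{4n+2}-e_{4n})\geq(e_n)$) by a sequence equivalent to $(e_k)$, so the normalized version of $(z_k)$ is an admissible test sequence. The converse direction is immediate, since a normalized basic sequence dominated by $(e_n)$ is in particular a bounded such sequence. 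For \eqref{JS-inclusion}, I would use that $\mathcal{S}_\xi \subseteq \mathcal{S}_\zeta$ after shifting supports: given a witnessing $(a_n)$ with $F\in\mathcal{S}_\xi$, I apply the spreading property together with the fact (quoted from \cite{AGR03}) that $S\in\mathcal{S}_\xi$ with $d(\xi,\zeta)\leq\min S$ implies $S\in\mathcal{S}_\zeta$; spreading the support far enough out, and using that $(e_n)$ is subsymmetric (being the basis of $\ell_p$ or $c_0$), preserves the required inequality. Item \eqref{JS-compact} follows from Proposition \ref{WS-compact} restricted to the admissible sequences, since $\mathcal{K}\subseteq\mathcal{WS}_{e,\xi}\subseteq\mathcal{JS}_{e,\xi}$.

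Items \eqref{JS-embedding} and \eqref{JS-closed} are where I would be most careful. For \eqref{JS-embedding}: if $T\notin\mathcal{JS}_{e,\xi}(X,Y)$ there is a normalized basic $(x_n)\leq(e_n)$ whose image $(Tx_n)$ does $\mathcal{S}_\xi$-dominate $(e_n)$; since $J$ is an embedding, $\norm{JTx} \geq c\norm{Tx}$ for some $c>0$, so $(JTx_n)$ also $\mathcal{S}_\xi$-dominates $(e_n)$ via the \emph{same} admissible sequence $(x_n)$, giving $JT\notin\mathcal{JS}_{e,\xi}(X,Z)$. For \eqref{JS-closed}: I take $T_j\to T$ in norm with each $T_j$ class $\mathcal{JS}_{e,\xi}$, fix an admissible normalized basic $(x_n)\leq(e_n)$ and $\epsilon>0$, choose $j$ with $\norm{T-T_j}<\epsilon/2$, then use the characterization \eqref{bounded-sequence} to extract $(a_n)$ and $F\in\mathcal{S}_\xi$ witnessing smallness for $T_j$; the triangle inequality bounds $\norm{\sum a_n Tx_n}$ by $\norm{\sum a_n T_jx_n}$ plus a controlled error of the form $\norm{T-T_j}\norm{\sum a_nx_n}$, and here the domination $(x_n)\leq(e_n)$ is exactly what bounds the error term by a multiple of $\norm{\sum a_ne_n}$. \textbf{This is the main obstacle:} without the restriction to dominated sequences there would be no control on $\norm{\sum a_n x_n}$, which is precisely why the old class $\mathcal{WS}_{e,\xi}$ need not be norm-closed in the same easy way and why the authors weakened the definition.

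Finally, for \eqref{JS-multiplicative-ideal} I would factor the verification: given an admissible normalized basic sequence in $W$, push it through $A$, normalize, and check it remains (up to passing to a subsequence and using Proposition \ref{WS-compact} to dispose of norm-convergent images) dominated by $(e_n)$ so that it is a legitimate test sequence for $T$; then apply that $T\in\mathcal{JS}_{e,\xi}(X,Y)$ produces the required smallness for $(TAx_n)$, and finally absorb $B$ by the estimate $\norm{\sum a_n BTAx_n}\leq\norm{B}\norm{\sum a_n TAx_n}$. The composition with $B$ on the left is routine since it only shrinks the target norm by a constant, while composition with $A$ on the right requires the dominated-by-$(e_n)$ hypothesis to transfer correctly, which the subsymmetry of $(e_n)$ again secures.
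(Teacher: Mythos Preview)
Your proposal is essentially correct and tracks the paper's proof for parts \eqref{bounded-sequence}--\eqref{JS-closed}; the paper in fact omits \eqref{bounded-sequence} as ``almost identical to the proof of Proposition~\ref{seminormalized}'' and dispatches \eqref{JS-inclusion}--\eqref{JS-embedding} in a sentence apiece, exactly along the lines you describe. In \eqref{JS-closed} the paper chooses $\|T_j-T\|<\epsilon/(2C)$, where $C$ is the domination constant of $(x_n)$ by $(e_n)$, so that the error term $\|T-T_j\|\,\|\sum_{n\in F}a_nx_n\|$ is bounded by $\tfrac{\epsilon}{2}\|\sum_{n\in F}a_ne_n\|$; you have the right idea but should make that constant explicit. (Your aside that this is ``why $\mathcal{WS}_{e,\xi}$ need not be norm-closed'' is slightly off: the defect of $\mathcal{WS}_{e,\xi}$ exhibited in the paper is failure of closure under \emph{addition}, not under norm limits.)

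For part \eqref{JS-multiplicative-ideal} your route---pushing a normalized basic $(w_n)$ through $A$, then renormalizing and extracting a basic subsequence---does work but is more laborious than necessary. The paper instead invokes the bounded-sequence characterization \eqref{bounded-sequence} directly: if $(w_n)$ is bounded in $W$ and dominated by $(e_n)$, then $(Aw_n)$ is automatically bounded in $X$ and dominated by $(e_n)$ (with constant $\|A\|$ times the old one), so \eqref{bounded-sequence} applied to $T$ immediately yields $(a_n)$ and $F\in\mathcal{S}_\xi$ with $\|\sum_{n\in F}a_nTAw_n\|$ as small as desired relative to $\|\sum_{n\in F}a_ne_n\|$; then multiply by $\|B\|$. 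This bypasses all normalization, subsequence extraction, and the appeal to Proposition~\ref{WS-compact} that your approach would require. The payoff of proving \eqref{bounded-sequence} first is precisely that it makes \eqref{JS-multiplicative-ideal} a two-line argument.
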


\begin{proof}The proof of \eqref{bounded-sequence} is almost identical to the proof of Proposition \ref{seminormalized}, so we omit it.

\eqref{JS-inclusion} follows from the spreading property for Schreier families, together with the fact that for any pair of ordinals $1\leq\xi\leq\zeta\leq\omega_1$ there exists $d=d(\xi,\zeta)$ such that for any $S\in\mathcal{S}_\xi$ with $d\leq\min S$ we have $S\in\mathcal{S}_\zeta$.

\eqref{JS-compact} is just another consequence of Proposition \ref{WS-compact}.

\eqref{JS-embedding} is immediate from the definition of $\mathcal{JS}_{e,\xi}$.

Let us now prove \eqref{JS-closed}.  Suppose $(T_j)$ is a sequence in $\mathcal{JS}_{e,\xi}(X,Y)$ with $T_j\to T$ for some $T\in\mathcal{L}(X,Y)$.  Let $(x_n)$ be a normalized basic sequence in $X$ which is $C$-dominated, $C\in[1,\infty)$, by $(e_n)$, and let $\epsilon>0$.  Find $T_j$ with $\norm{T_j-T}<\frac{\epsilon}{2C}$.  Now let $(a_n)_{n\in F}$, $F\in\mathcal{S}_\xi$ be such that
\[\norm{\sum_{n\in F} a_nT_jx_n}<\frac{\epsilon}{2}\norm{\sum_{n\in F}a_ne_n}.\]
Then
\[\norm{\sum_{n\in F} a_nTx_n}\leq\norm{\sum_{n\in F} a_nT_jx_n}+\norm{T-T_j}\norm{\sum_{n\in F} a_nx_n}<\epsilon\norm{\sum_{n\in F}a_ne_n}.\]
This completes the proof of \eqref{JS-closed}.

Lastly, we shall prove \eqref{JS-multiplicative-ideal}.  Let $(w_n)$ be a bounded sequence in $W$ which is dominated by $(e_n)$, and let $\epsilon>0$.  Then $(Aw_n)$ is a bounded sequence in $X$ which is dominated by $(e_n)$, which means by \eqref{bounded-sequence} that we can find $F\in\mathcal{S}_\xi$ and $(a_n)_{n\in F}$ so that
\[\norm{\sum_{n\in F}a_nBTAw_n}\leq\norm{B}\norm{\sum_{n\in F}TAw_n}<\frac{\epsilon}{\norm{A}}\norm{\sum_{n\in F}Aw_n}\leq\epsilon\norm{\sum_{n\in F}Aw_n}.\]\end{proof}

\begin{remark}Observe that in the previous Proposition \ref{property-list}, the assumption $e=(e_n)$ is the canonical basis for $\ell_p$, $1\leq p<\infty$, or $c_0$, is only required for parts \eqref{bounded-sequence} and \eqref{JS-multiplicative-ideal}.  In parts \eqref{JS-inclusion}, \eqref{JS-compact}, \eqref{JS-embedding}, and \eqref{JS-closed}, $e=(e_n)$ could be any normalized basis for a Banach space $E$.\end{remark}

If we want classes $\mathcal{JS}_{e,\xi}$ form operator ideals, it remains to show that they are closed under addition.  In case $1\leq\xi<\omega_1$, this is unknown.  However, below we present a partial result in that direction, which turns out to be sufficient for our purposes here.

\begin{proposition}\label{JS-Schreier-addition}Let $X$ and $Y$ be Banach spaces, let $e=(e_n)$ denote the canonical basis for $\ell_p$, $1\leq p<\infty$, or $c_0$, and let $1\leq\xi,\zeta\leq\omega_1$ be ordinals.  Suppose $S\in\mathcal{JS}_{e,\xi}(X,Y)$ and $T\in\mathcal{JS}_{e,\zeta}(X,Y)$.
\begin{itemize}\item[(i)]  If $\xi=\omega_1$ or $\zeta=\omega_1$ then $S+T\in\mathcal{JS}_{e,\omega_1}(X,Y)$.
\item[(ii)]  If $\xi$ and $\zeta$ are both countable, i.e. $<\omega_1$, then $S+T\in\mathcal{JS}_{e,\xi+\zeta}(X,Y)$.\end{itemize}\end{proposition}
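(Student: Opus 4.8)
The plan is to prove both statements simultaneously by working with the bounded-sequence characterization from Proposition \ref{property-list}\eqref{bounded-sequence}. Fix a bounded sequence $(x_n)$ in $X$ which is dominated by $(e_n)$, say $(x_n)\leq_D(e_n)$, and fix $\epsilon>0$; I must produce a finitely-supported $(a_n)$ whose support lies in the appropriate Schreier family (all of $[\mathbb{N}]^{<\omega}=\mathcal{S}_{\omega_1}$ in case (i), or $\mathcal{S}_{\xi+\zeta}$ in case (ii)) witnessing $\norm{\sum a_n(S+T)x_n}<\epsilon\norm{\sum a_ne_n}$. The key idea is to apply the hypothesis on $S$ \emph{first} to extract a favorable block of coefficients, then to apply the hypothesis on $T$ \emph{within that block}, and finally to combine the two estimates using the triangle inequality together with the geometry of the $\ell_p$ (or $c_0$) norm.

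First I would use that $S\in\mathcal{JS}_{e,\xi}(X,Y)$ to find $F\in\mathcal{S}_\xi$ and coefficients $(a_n)_{n\in F}$ with $\norm{\sum_{n\in F}a_nSx_n}$ small relative to $\norm{\sum_{n\in F}a_ne_n}$. The crucial second step is to treat the finitely many indices in $F$ as a new ``template'': since $(x_n)_{n\in F}$ is still dominated by $(e_n)$, I would re-index and apply $T\in\mathcal{JS}_{e,\zeta}(X,Y)$ to obtain a sub-collection $G$, with $G\in\mathcal{S}_\zeta$ relative to this re-indexing, making $\norm{\sum_{n\in G}b_nTx_n}$ small. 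The composite support — first choosing a block governed by $\mathcal{S}_\xi$ and then a sub-block within it governed by $\mathcal{S}_\zeta$ — is exactly the kind of object captured by the operation $\mathcal{S}_\xi[\mathcal{S}_\zeta]$, and Proposition \ref{1.iii} provides an $L\in[\mathbb{N}]$ with $\mathcal{S}_\xi[\mathcal{S}_\zeta](L)\subseteq\mathcal{S}_{\zeta+\xi}$. Passing to indices in $L$ at the outset (which is legitimate since a subsequence of a dominated bounded sequence is still such) lands the final support inside $\mathcal{S}_{\zeta+\xi}=\mathcal{S}_{\xi+\zeta}$ for the countable case. For case (i), when $\xi$ or $\zeta$ equals $\omega_1$, there is no Schreier constraint at all ($\mathcal{S}_{\omega_1}=[\mathbb{N}]^{<\omega}$), so the nesting argument simplifies and no appeal to Proposition \ref{1.iii} is needed.

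With the support under control, the arithmetic is routine: using the unconditionality and $1$-symmetry of the $\ell_p$/$c_0$ basis, I would combine $\norm{\sum a_n(S+T)x_n}\leq\norm{\sum a_nSx_n}+\norm{\sum a_nTx_n}$, arranging in the first step that $S$ contributes less than $\tfrac{\epsilon}{2}\norm{\sum a_ne_n}$ and, crucially, making the second application of $T$ on a support so sparse and with coefficients so small that it also contributes less than $\tfrac{\epsilon}{2}\norm{\sum a_ne_n}$. A subtlety to handle carefully is that restricting the coefficient vector from $F$ to the sub-support $G$ must not blow up the ratio to $\norm{\sum_{n\in G}a_ne_n}$; this is exactly where the lattice property of the $\ell_p$ and $c_0$ norms (restriction to a subset can only decrease the norm) is used to guarantee $\norm{\sum_{n\in G}a_ne_n}\leq\norm{\sum_{n\in F}a_ne_n}$, so the $S$-estimate survives passage to the smaller support.

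The main obstacle I anticipate is the second application of the hypothesis: after fixing the coefficients $(a_n)_{n\in F}$ from the $S$-step, I am no longer free to choose them when invoking the $T$-hypothesis, since $\mathcal{JS}$-membership only promises the \emph{existence} of some witnessing coefficients, not control over prescribed ones. The correct remedy is to reverse the order of quantifiers: rather than fixing $(a_n)$ and then shrinking, I would first apply $T$ to the dominated sequence $(x_n)_{n\in L}$ to extract its witnessing block $G\in\mathcal{S}_\zeta$ and coefficients, and then, because $(x_n)_{n\in G}$ remains dominated by $(e_n)$, apply $S$ to that restricted sequence to further thin the support inside $G$ according to $\mathcal{S}_\xi$. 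The outer-$\mathcal{S}_\xi$/inner-$\mathcal{S}_\zeta$ bookkeeping must then be reconciled with Proposition \ref{1.iii} (possibly after relabeling so the roles of $\xi,\zeta$ match the statement $\mathcal{S}_\xi[\mathcal{S}_\zeta]\subseteq\mathcal{S}_{\zeta+\xi}$); getting this nesting and the min-index conditions defining $\mathcal{S}_\xi[\mathcal{S}_\zeta]$ exactly right, while simultaneously keeping both the $S$- and $T$-error terms below $\tfrac{\epsilon}{2}$, is the delicate core of the argument.
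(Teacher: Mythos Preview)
Your diagnosis of the obstacle is exactly right: once one operator's witnessing coefficients are fixed, the other operator's estimate is not guaranteed for those same coefficients. But your proposed remedy---swap the order, apply $T$ first to produce a single $G\in\mathcal{S}_\zeta$ and coefficients $(b_n)_{n\in G}$, then apply $S$ to $(x_n)_{n\in G}$---has precisely the same defect. The second application of $S$ hands you \emph{new} coefficients $(c_n)$ on some sub-support $H$, and there is no reason $\norm{\sum_{n\in H}c_nTx_n}$ should be small: the $T$-estimate was for the old coefficients $(b_n)$, not the new ones. The lattice property of the $\ell_p$/$c_0$ norm you invoke controls $\norm{\sum c_ne_n}$, not $\norm{\sum c_nTx_n}$. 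Worse, $G$ is a finite set, so ``apply $S\in\mathcal{JS}_{e,\xi}$ to $(x_n)_{n\in G}$'' does not even make sense---the $\mathcal{JS}$ hypothesis is a statement about infinite sequences.

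The idea you are missing, and which the paper supplies, is to apply $T$ not once but infinitely many times before ever touching $S$. Working along the subsequence $L=(n_k)$ from Proposition~\ref{1.iii}, one uses $T\in\mathcal{JS}_{e,\zeta}$ on successive tails of $(x_{n_k})$ to produce disjoint blocks $F_1<F_2<\cdots$ in $\mathcal{S}_\zeta$ and scalars $(a_k)$ with $\norm{\sum_{k\in F_j}a_ke_k}=1$ and $\norm{\sum_{k\in F_j}a_kTx_{n_k}}<\epsilon\,2^{-j-1}$ for every $j$. Set $x'_j=\sum_{k\in F_j}a_kx_{n_k}$. Because normalized block sequences of the $\ell_p$/$c_0$ basis are $1$-equivalent to $(e_n)$, the \emph{infinite} sequence $(x'_j)$ is again bounded and dominated by $(e_n)$, so now $S\in\mathcal{JS}_{e,\xi}$ legitimately applies to it: choose $F\in\mathcal{S}_\xi$ and $(b_j)_{j\in F}$ with $\norm{\sum_{j\in F}b_je_j}=1$ and $\norm{\sum_{j\in F}b_jSx'_j}<\epsilon/2$. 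The $T$-term is controlled \emph{blockwise}, independently of what $(b_j)$ the $S$-step produced: $\norm{\sum_{j\in F}b_jTx'_j}\leq\sum_{j\in F}|b_j|\,\epsilon\,2^{-j-1}\leq\epsilon/2$ by H\"older. The resulting support $\{n_k:k\in\bigcup_{j\in F}F_j\}$ lies in $\mathcal{S}_\xi[\mathcal{S}_\zeta](L)\subseteq\mathcal{S}_{\zeta+\xi}$, which handles case~(ii); case~(i) is the same with $L=\mathbb{N}$ and no Schreier bookkeeping. (Incidentally, $\zeta+\xi$ and $\xi+\zeta$ need not coincide for ordinals, so your equation $\mathcal{S}_{\zeta+\xi}=\mathcal{S}_{\xi+\zeta}$ is unjustified; one recovers the ordering in the statement by swapping the roles of $S$ and $T$.)
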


\begin{proof}Pick any bounded sequence $(x_n)$ in $X$ which is dominated by $(e_n)$.  By Proposition \ref{1.iii} we can find $L=(n_k)\in[\mathbb{N}]$ such that $\mathcal{S}_\xi[\mathcal{S}_\zeta](L)\subseteq\mathcal{S}_{\zeta+\xi}$ in case (ii), and let $L=(n_k)=\mathbb{N}$ in case (i).  Now, by successively considering the tails of $(x_{n_k})$ and using the spreading property of $\mathcal{S}_\zeta$ we can find $F_1<F_2<F_3<\cdots\in\mathcal{S}_\zeta$ and scalars $(a_k)$ such that
\[\norm{\sum_{k\in F_j}a_kTx_{n_k}}<\epsilon 2^{-j-1}\;\;\;\text{ and }\;\;\;\norm{\sum_{k\in F_j}a_ke_k}=1\;\;\;\text{ for all }j\in\mathbb{N}.\]
Let us form matching block sequences by setting
\[x'_j=\sum_{k\in F_j}a_kx_{n_k},\;\;\;\text{ and }\;\;\;e'_j=\sum_{k\in F_j}a_ke_k,\;\;\;j\in\mathbb{N}.\]
Recall that every normalized block sequence of $(e_n)$ is 1-equivalent to $(e_n)$ (cf., e.g., \cite[Lemma 2.1.1]{AK06}).  In particular, $(e'_j)$ is 1-equivalent to $(e_n)$, which means $(x'_j)$ is dominated by $(e_n)$.  We can therefore find scalars $(b_j)\in c_{00}$ and $F\in\mathcal{S}_\xi$ such that
\[\norm{\sum_{j\in F}b_jSx'_j}<\frac{\epsilon}{2}\;\;\;\text{ and }\;\;\;\norm{\sum_{j\in F}b_je'_j}=\norm{\sum_{j\in F}b_je_j}=1.\]
Next, due to H\"older's inequality, we obtain
\[\norm{\sum_{j\in F}b_j(S+T)x'_j}<\frac{\epsilon}{2}+\frac{\epsilon}{2}\sum_{j\in F}|b_j|2^{-j}\leq\epsilon=\epsilon\norm{\sum_{j\in F}b_je'_j}.\]
In case (i) we are already done, and in case (ii) we need only recall that $\mathcal{S}_\xi[\mathcal{S}_\zeta](L)\subseteq\mathcal{S}_{\zeta+\xi}$, so that we are done anyway.\end{proof}

The limitations on the above proposition prevent us from concluding that $\mathcal{JS}_{e,\xi}$ is an operator ideal when $1\leq\xi<\omega_1$.  However, if we combine Proposition \ref{property-list}\eqref{JS-compact},\eqref{JS-closed},\eqref{JS-multiplicative-ideal}, and Proposition \ref{JS-Schreier-addition}, we obtain the following nice result when $\xi=\omega_1$.

\begin{theorem}\label{JS-ideal}Let $e=(e_n)$ denote the canonical basis for $\ell_p$, $1\leq p<\infty$, or $c_0$.  Then $\mathcal{JS}_{e,\omega_1}$ is a norm-closed operator ideal.\end{theorem}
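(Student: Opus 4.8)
The plan is simply to assemble the defining axioms of a norm-closed operator ideal from the results already established, since the substantive analytic work has been carried out in Proposition \ref{property-list} and Proposition \ref{JS-Schreier-addition}. Recall that to qualify as an operator ideal, the class $\mathcal{JS}_{e,\omega_1}$ must have the ideal property, and each of its components must be a linear subspace of $\mathcal{L}(X,Y)$ containing the finite-rank operators therein; norm-closedness is the additional requirement that each component be closed in operator norm.

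First I would fix Banach spaces $X$ and $Y$ and verify that the component $\mathcal{JS}_{e,\omega_1}(X,Y)$ is a linear subspace of $\mathcal{L}(X,Y)$. Closure under addition is exactly Proposition \ref{JS-Schreier-addition}(i) applied with $\xi=\zeta=\omega_1$: any two operators $S,T\in\mathcal{JS}_{e,\omega_1}(X,Y)$ satisfy $S+T\in\mathcal{JS}_{e,\omega_1}(X,Y)$. Closure under scalar multiplication is then immediate, since for a scalar $\lambda$ one may write $\lambda T=(\lambda\id_Y)\,T\,\id_X$ and invoke the multiplicative ideal property, Proposition \ref{property-list}\eqref{JS-multiplicative-ideal}; alternatively, scaling an image sequence by a nonzero constant merely rescales the domination constant and hence cannot create $\mathcal{S}_{\omega_1}$-domination where there was none, while the zero operator is compact and so lies in the class by Proposition \ref{property-list}\eqref{JS-compact}.

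Next I would dispatch the remaining axioms by direct citation. The ideal property --- that $BTA\in\mathcal{JS}_{e,\omega_1}(W,Z)$ whenever $A\in\mathcal{L}(W,X)$, $B\in\mathcal{L}(Y,Z)$, and $T\in\mathcal{JS}_{e,\omega_1}(X,Y)$ --- is precisely Proposition \ref{property-list}\eqref{JS-multiplicative-ideal}. Every finite-rank operator is compact and therefore lies in $\mathcal{JS}_{e,\omega_1}(X,Y)$ by Proposition \ref{property-list}\eqref{JS-compact}, so each component contains its finite-rank operators. Combined with the linear-subspace property from the previous step, this shows $\mathcal{JS}_{e,\omega_1}$ is an operator ideal. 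Finally, norm-closedness of each component is Proposition \ref{property-list}\eqref{JS-closed}, which completes the argument.

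Because every ingredient is already in hand, there is no genuine obstacle at this stage; the proof is a bookkeeping assembly. If one insists on locating the single nontrivial step, it is the closure under addition supplied by Proposition \ref{JS-Schreier-addition}(i), whose proof rested on a blocking argument together with H\"older's inequality and the trivial closure of $\mathcal{S}_{\omega_1}=[\mathbb{N}]^{<\omega}$ under finite unions; but since that proposition is available, here we need only cite it.
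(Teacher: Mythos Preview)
Your proposal is correct and follows exactly the same route as the paper: the theorem is stated there as an immediate consequence of combining Proposition~\ref{property-list}\eqref{JS-compact},\eqref{JS-closed},\eqref{JS-multiplicative-ideal} with Proposition~\ref{JS-Schreier-addition}, and your write-up simply spells out this assembly.
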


We also have the following property for this same special case $\xi=\omega_1$.

\begin{proposition}\label{contains-lp}Let $e=(e_n)$ be a normalized basis for a Banach space $E$, and let $X$ and $Y$ be Banach spaces such that either $X$ or $Y$ fails to contain a copy of $E$.  Then
\[\mathcal{JS}_{e,\omega_1}(X,Y)=\mathcal{L}(X,Y).\]\end{proposition}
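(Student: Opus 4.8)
The plan is to establish the nontrivial inclusion $\mathcal{L}(X,Y)\subseteq\mathcal{JS}_{e,\omega_1}(X,Y)$ by contraposition; the reverse inclusion is automatic. So I would suppose $T\in\mathcal{L}(X,Y)$ fails to be class $\mathcal{JS}_{e,\omega_1}$. By definition there is then a normalized basic sequence $(x_n)$ in $X$ with $(x_n)\leq(e_n)$ whose image $(Tx_n)$ does $\mathcal{S}_{\omega_1}$-dominate $(e_n)$. Since $\xi=\omega_1$, each instance of ``$\mathcal{S}_{\omega_1}$-dominates'' is just ordinary domination, and the aim is to show that this witnessing sequence forces $E$ to embed into \emph{both} $X$ and $Y$, contradicting the hypothesis that at least one of them omits a copy of $E$.

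Concretely, I would record three inequalities, valid for all $(a_n)\in c_{00}$ with suitable constants. From $(x_n)\leq(e_n)$ we get $\norm{\sum a_nx_n}\leq C\norm{\sum a_ne_n}$; from the assumed domination $(Tx_n)\geq(e_n)$ we get $\norm{\sum a_ne_n}\leq C'\norm{\sum a_nTx_n}$; and from boundedness of $T$ we always have $\norm{\sum a_nTx_n}\leq\norm{T}\norm{\sum a_nx_n}$. Chaining the first and third shows that $(e_n)$ dominates $(Tx_n)$, which together with the second gives that $(Tx_n)$ is equivalent to $(e_n)$. Chaining the second and third shows that $(x_n)$ dominates $(e_n)$, which together with the first gives that $(x_n)$ is equivalent to $(e_n)$.

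Finally I would conclude. Since $(x_n)$ is equivalent to the basis $(e_n)$ of $E$, the closed span $[x_n]\subseteq X$ is isomorphic to $E$; and since $(Tx_n)$ is equivalent to $(e_n)$ it is in particular basic, so $[Tx_n]\subseteq Y$ is also isomorphic to $E$. Thus both $X$ and $Y$ contain a copy of $E$, contradicting the hypothesis, whence $T$ must be class $\mathcal{JS}_{e,\omega_1}$ after all. There is no real obstacle here beyond careful bookkeeping of the domination constants and of the directions of the three inequalities; the only point worth flagging is that $(Tx_n)$ must be verified to be a genuine basic sequence, which is immediate from its two-sided equivalence to the basic sequence $(e_n)$.
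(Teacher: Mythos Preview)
Your proof is correct and follows essentially the same approach as the paper: assume $T\notin\mathcal{JS}_{e,\omega_1}(X,Y)$, obtain a normalized basic sequence $(x_n)$ with $(x_n)\leq(e_n)\leq(Tx_n)\leq(x_n)$, and conclude that both $X$ and $Y$ contain a copy of $E$. The paper's version is simply a terser presentation of exactly this chain of dominations.
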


\begin{proof}Indeed, suppose $\mathcal{JS}_{e,\xi}(X,Y)\neq\mathcal{L}(X,Y)$.  Then there is a linear operator $T\in\mathcal{L}(X,Y)\setminus\mathcal{JS}_{e,\xi}(X,Y)$, which means we can find a normalized basic sequence $(x_n)$ in $X$ which is dominated by $(e_n)$, and such that $(Tx_n)$ dominates $(e_n)$.  Then
\[(x_n)\leq(e_n)\leq(Tx_n)\leq(x_n)\]
so that $X$ and $Y$ both contain copies of $E$.\end{proof}

Recall that every seminormalized basic sequence in a reflexive space is weakly null.  Together with the Bessaga-Pe\l czy\'{n}ski Selection Principle and the Principle of Small Perturbations, this means that any seminormalized basic sequence in a reflexive space with a basis admits a subsequence equivalent to a normalized block basic sequence.  In the case of $\ell_q$, $1<q<\infty$, this is in turn equivalent to the canonical basis (cf., e.g., \cite[Lemma 2.1.1]{AK06}), so that every normalized basic sequence has a subsequence dominated by the canonical basis $(e_n)$ of $\ell_p$, $1\leq p\leq q$.  Obviously, if $q=1$ and $1\leq p\leq q$ then $p=1$ so that, again, every normalized basic sequence in $\ell_q$ is dominated by $(e_n)$.  From this we obtain the following.

\begin{proposition}\label{JS-WS}Let $Y$ be a Banach space, let $1\leq\xi\leq\omega_1$ be an ordinal, and let $e=(e_n)$ denote the canonical basis for $\ell_p$, where $1\leq p\leq q<\infty$.  Then
\[\mathcal{JS}_{e,\xi}(\ell_q,Y)=\mathcal{WS}_{e,\xi}(\ell_q,Y).\]\end{proposition}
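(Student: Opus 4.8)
The plan is to prove the two inclusions separately, the first being immediate and the second carrying all the content. For the inclusion $\mathcal{WS}_{e,\xi}(\ell_q,Y)\subseteq\mathcal{JS}_{e,\xi}(\ell_q,Y)$, I would simply observe that $\mathcal{JS}_{e,\xi}$ tests the failure-to-dominate condition against a \emph{subcollection} of the normalized basic sequences tested by $\mathcal{WS}_{e,\xi}$, namely only those dominated by $(e_n)$. Hence every operator satisfying the stronger requirement of $\mathcal{WS}_{e,\xi}$ satisfies in particular the weaker one of $\mathcal{JS}_{e,\xi}$, and this direction needs nothing beyond unwinding the two definitions.

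The substance is the reverse inclusion $\mathcal{JS}_{e,\xi}(\ell_q,Y)\subseteq\mathcal{WS}_{e,\xi}(\ell_q,Y)$, which I would establish by contraposition. Suppose $T\notin\mathcal{WS}_{e,\xi}(\ell_q,Y)$, so that there is a normalized basic sequence $(x_n)$ in $\ell_q$ whose image $(Tx_n)$ does $\mathcal{S}_\xi$-dominate $(e_n)$, say with constant $C$. The key input is the fact recorded in the paragraph preceding the proposition: because $\ell_q$ is reflexive when $1<q<\infty$ (and trivially when $q=1$, where necessarily $p=1$), every normalized basic sequence in $\ell_q$ admits a subsequence $(x_{n_k})$ dominated by the canonical basis $(e_n)$ of $\ell_p$ for $1\le p\le q$. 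I would fix such a subsequence; then $(x_{n_k})$ is again a normalized basic sequence with $(x_{n_k})\leq(e_k)$, so it is an admissible test sequence for membership in $\mathcal{JS}_{e,\xi}$.

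It then remains to check that passing to this subsequence preserves $\mathcal{S}_\xi$-domination of the basis, i.e. that $(Tx_{n_k})_k$ still $\mathcal{S}_\xi$-dominates $(e_k)_k$; this is the crux and the only place where the structure of the Schreier families and of the $\ell_p$ basis enters. Given $G\in\mathcal{S}_\xi$ and scalars $(b_k)_{k\in G}$, I would set $F=\{n_k:k\in G\}$ and $a_{n_k}=b_k$. Since $n_k\geq k$, the spreading property of $\mathcal{S}_\xi$ gives $F\in\mathcal{S}_\xi$, so the hypothesis applies to $F$ and yields $\norm{\sum_{k\in G}b_ke_{n_k}}\leq C\norm{\sum_{k\in G}b_kTx_{n_k}}$. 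Finally, the symmetry of the canonical basis of $\ell_p$ gives $\norm{\sum_{k\in G}b_ke_k}=\norm{\sum_{k\in G}b_ke_{n_k}}$, whence $(Tx_{n_k})_k$ $\mathcal{S}_\xi$-dominates $(e_k)_k$ with constant $C$. Thus $T$ fails the defining condition for $\mathcal{JS}_{e,\xi}$ against the admissible sequence $(x_{n_k})$, so $T\notin\mathcal{JS}_{e,\xi}(\ell_q,Y)$, completing the contrapositive.

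I expect the re-indexing bookkeeping to be the only genuine obstacle: one must simultaneously keep the chosen subsequence eligible for the $\mathcal{JS}_{e,\xi}$ test (via the domination $(x_{n_k})\leq(e_k)$) and transfer the $\mathcal{S}_\xi$-domination from the original indices to the subsequence indices, which is exactly where the spreading property and the subsymmetry of the $\ell_p$ basis are used. Everything else in the argument is routine.
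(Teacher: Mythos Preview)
Your proposal is correct and is exactly the argument the paper has in mind: the paper omits an explicit proof, deriving the proposition directly from the paragraph preceding it (every normalized basic sequence in $\ell_q$ admits a subsequence dominated by the $\ell_p$ basis), and your write-up simply fills in the re-indexing details via the spreading property of $\mathcal{S}_\xi$ and the symmetry of the $\ell_p$ basis. There is nothing to add.
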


\noindent Consequently, in these cases we can apply a nice result from \cite{BF11}.

\begin{theorem}[{\cite[Corollary 19]{BF11}}]\label{corollary-19}Let $X$ and $Y$ be separable Banach spaces, and let $e=(e_n)$ denote any normalized 1-spreading basis.  (In particular, $e=(e_n)$ can be chosen from among the canonical bases for $\ell_p$, $1\leq p<\infty$, or $c_0$.)  If $T\in\mathcal{WS}_{e,\omega_1}(X,Y)$ then there exists a countable ordinal $1\leq\xi<\omega_1$ such that $T\in\mathcal{WS}_{e,\xi}(X,Y)$.\end{theorem}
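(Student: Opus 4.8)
The plan is to run a Bourgain-type ordinal-index argument together with a boundedness theorem from descriptive set theory; this is the standard toolkit for upgrading an ``$\omega_1$'' statement to a countable one in this setting. Fix the separable spaces $X,Y$ and an operator $T\in\mathcal{WS}_{e,\omega_1}(X,Y)$. For each pair of integers $C,K\geq 1$ I would form the tree $\mathbb{T}_{C,K}$ on $X^{<\omega}$ whose nodes are the finite normalized sequences $(x_1,\dots,x_m)$ that are basic with basis constant at most $K$ and whose $T$-images $C$-dominate the corresponding initial segment of $(e_n)$, i.e. $\norm{\sum_{i\le m}a_ie_i}\leq C\norm{\sum_{i\le m}a_iTx_i}$ for all scalars $(a_i)$; order $\mathbb{T}_{C,K}$ by end-extension. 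Each defining condition is closed (intersect the closed conditions over a countable dense set of rational coefficient vectors), so $\mathbb{T}_{C,K}$ is a closed subset of the Polish space $X^{<\omega}$.

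The first key point is well-foundedness. An infinite branch of $\mathbb{T}_{C,K}$ is exactly a normalized basic sequence $(x_n)$, of basis constant $\le K$, whose image $C$-dominates $(e_n)$ on every finite initial segment, hence on all of $c_{00}$; this would contradict $T\in\mathcal{WS}_{e,\omega_1}(X,Y)$. Thus each $\mathbb{T}_{C,K}$ is well-founded, and since it is a closed (hence analytic) well-founded relation on a Polish space, the Kunen--Martin boundedness theorem guarantees that its rank $o(\mathbb{T}_{C,K})$ is a \emph{countable} ordinal. Setting $I(T):=\sup_{C,K\in\mathbb{N}}o(\mathbb{T}_{C,K})$, a countable supremum of countable ordinals, we get $I(T)<\omega_1$. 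This is the only place separability of $X$ and $Y$ enters essentially.

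The second key point links the tree rank to $\mathcal{S}_\xi$-domination. The tree associated to $\mathcal{S}_\xi$ --- the increasing finite sequences $(n_1,\dots,n_k)$ with $\{n_1,\dots,n_k\}\in\mathcal{S}_\xi$, ordered by end-extension --- is well-founded with ordinal index $\iota(\mathcal{S}_\xi)$, known to equal $\omega^\xi$; all I actually need is that these indices are unbounded in $\omega_1$. Suppose now that some normalized basic sequence $(x_n)$ of basis constant $\le K$ has image $\mathcal{S}_\xi$-$C$-dominating $(e_n)$. Then $(n_1,\dots,n_k)\mapsto(x_{n_1},\dots,x_{n_k})$ is a strictly monotone map of the $\mathcal{S}_\xi$-tree into $\mathbb{T}_{C,K}$: heredity of $\mathcal{S}_\xi$ makes every initial segment a legitimate node, while the $1$-spreading hypothesis on $(e_n)$ converts domination indexed by $\{n_i\}$ into position-indexed domination $\norm{\sum a_ie_i}\leq C\norm{\sum a_iTx_{n_i}}$ with the same constant. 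A strictly monotone tree map cannot decrease the target rank below that of its domain, so $o(\mathbb{T}_{C,K})\geq\iota(\mathcal{S}_\xi)$.

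To finish, choose any countable $\xi$ with $\iota(\mathcal{S}_\xi)=\omega^\xi>I(T)$, possible since the $\iota(\mathcal{S}_\xi)$ are cofinal in $\omega_1$. Then $o(\mathbb{T}_{C,K})<\iota(\mathcal{S}_\xi)$ for every $C,K$, so by the contrapositive of the previous paragraph no normalized basic sequence of any basis constant has image $\mathcal{S}_\xi$-dominating $(e_n)$ with any constant; since every normalized basic sequence has \emph{some} finite basis constant, this is precisely $T\in\mathcal{WS}_{e,\xi}(X,Y)$. The main obstacle, and the technical heart of the argument, is the index computation of the third paragraph: arranging the tree to be simultaneously closed (so boundedness applies) and rich enough to absorb a monotone copy of $\mathcal{S}_\xi$, and correctly invoking $1$-spreading to transfer domination between index sets. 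The boundedness step and the value $\iota(\mathcal{S}_\xi)=\omega^\xi$ are standard and can simply be cited.
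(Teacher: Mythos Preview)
The paper does not prove this statement; it is quoted verbatim as \cite[Corollary 19]{BF11} and used as a black box in the proof of Theorem~\ref{lp-linfty}. So there is no ``paper's own proof'' to compare against.

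That said, your outline is correct and is essentially the argument in \cite{BF11}. The tree $\mathbb{T}_{C,K}$ is indeed closed under initial segments (setting $a_m=0$ handles the domination condition, and initial segments of $K$-basic sequences are $K$-basic), and the conditions defining it are closed in $X^m$ for each $m$ once you reduce to rational coefficients by homogeneity and continuity. Well-foundedness follows exactly as you say, and Kunen--Martin gives a countable rank for each closed well-founded tree on a Polish space; the countable supremum $I(T)$ is then countable. The embedding of the $\mathcal{S}_\xi$-tree into $\mathbb{T}_{C,K}$ via $(n_1,\dots,n_k)\mapsto(x_{n_1},\dots,x_{n_k})$ is injective because the $x_n$ are linearly independent, and the $1$-spreading hypothesis is exactly what converts $\mathcal{S}_\xi$-domination indexed by $\{n_i\}$ into initial-segment domination indexed by $\{1,\dots,k\}$ with the same constant~$C$. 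Finally, the hereditary property of $\mathcal{S}_\xi$ ensures that initial segments stay in the tree, and the index $\iota(\mathcal{S}_\xi)=\omega^\xi$ is standard. One small point worth making explicit: the basis-constant condition together with normalization already forces linear independence of $(x_1,\dots,x_m)$, so ``basic'' is genuinely a closed condition here.
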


\noindent This shall be used to prove the following.

\begin{theorem}\label{lp-linfty}Let $1\leq p<\infty$, and let $Z=\ell_{p'}$ with $\frac{1}{p}+\frac{1}{p'}=1$ if $p\neq 1$ and $Z=c_0$ if $p=1$.  Then $\mathcal{L}(\ell_1,Z)$ and $\mathcal{L}(\ell_p,\ell_\infty)$ each admit an uncountable chain of strictly increasing closed subideals.\end{theorem}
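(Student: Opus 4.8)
The plan is to exhibit, in each algebra, a strictly increasing $\omega_1$-indexed family of closed subideals built from the classes $\mathcal{JS}_{e,\zeta}$, repairing their (possible) failure of closure under addition by restricting the index to additively indecomposable ordinals, and then forcing the chain to have uncountable length by a descriptive-set-theoretic rank argument powered by Theorem \ref{corollary-19}. I would treat $\mathcal{L}(\ell_p,\ell_\infty)$ first, taking $e=(e_n)$ to be the canonical basis of $\ell_p$. By Proposition \ref{JS-WS} we then have $\mathcal{JS}_{e,\zeta}(\ell_p,\ell_\infty)=\mathcal{WS}_{e,\zeta}(\ell_p,\ell_\infty)$ for every $\zeta$, and by Proposition \ref{property-list} each such class is norm-closed, contains the finite-rank operators, and satisfies $BTA\in\mathcal{JS}_{e,\zeta}$; the only missing subideal axiom is closure under addition, which Proposition \ref{JS-Schreier-addition} supplies merely up to a shift, $\mathcal{JS}_{e,\zeta_1}+\mathcal{JS}_{e,\zeta_2}\subseteq\mathcal{JS}_{e,\zeta_1+\zeta_2}$.

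The key observation is that if $\gamma=\omega^\alpha$ is additively indecomposable then $\zeta_1,\zeta_2<\gamma$ forces $\zeta_1+\zeta_2<\gamma$, so $\mathcal{I}_\gamma:=\bigcup_{\zeta<\gamma}\mathcal{JS}_{e,\zeta}(\ell_p,\ell_\infty)$ is a genuine linear subspace closed under the two-sided multiplications. Its norm-closure $\overline{\mathcal{I}_\gamma}$ is then a bona fide closed subideal, and since each level is norm-closed one has $\mathcal{I}_\gamma\subseteq\overline{\mathcal{I}_\gamma}\subseteq\mathcal{JS}_{e,\gamma}(\ell_p,\ell_\infty)$; as $\gamma$ ranges over the additively indecomposable countable ordinals, Proposition \ref{property-list}\eqref{JS-inclusion} makes these closed subideals increase. (The chain is nontrivial: an isometric embedding $\ell_p\hookrightarrow\ell_\infty$ sends the $\ell_p$-basis to a sequence fully dominating $(e_n)$, hence lies outside every level.)

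It remains to see that $\{\overline{\mathcal{I}_\gamma}\}$ has uncountably many distinct members, and this is where Theorem \ref{corollary-19} enters. Although $\ell_\infty$ is nonseparable, every $T\in\mathcal{L}(\ell_p,\ell_\infty)$ has separable range, so corestricting to $\overline{\mathrm{ran}\,T}$ and invoking Proposition \ref{property-list}\eqref{JS-embedding} I can apply Theorem \ref{corollary-19} to conclude $\bigcup_{\zeta<\omega_1}\mathcal{WS}_{e,\zeta}(\ell_p,\ell_\infty)=\mathcal{WS}_{e,\omega_1}(\ell_p,\ell_\infty)$. If the chain stabilized at some countable $\gamma_0$, this union would coincide with the norm-closed, hence Borel, set $\mathcal{JS}_{e,\gamma_0}(\ell_p,\ell_\infty)$; but the descriptive-set-theoretic content of \cite{BF11} is precisely that $\mathcal{WS}_{e,\omega_1}$ is non-Borel ($\Pi^1_1$-complete), equivalently that $T\mapsto\min\{\zeta:T\in\mathcal{WS}_{e,\zeta}\}$ is an unbounded coanalytic rank. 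This unboundedness below $\omega_1$ rules out stabilization and lets me extract, by transfinite recursion along the additively indecomposable ordinals, an $\omega_1$-chain of strictly increasing closed subideals.

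Finally I would obtain the statement for $\mathcal{L}(\ell_1,Z)$, where $Z=\ell_{p'}$ or $c_0$, by duality: since $Z^*=\ell_p$ (reading $\ell_1$ when $p=1$) and $(\ell_1)^*=\ell_\infty$, the adjoint $T\mapsto T^*$ is an isometry of $\mathcal{L}(\ell_1,Z)$ into $\mathcal{L}(\ell_p,\ell_\infty)$ reversing composition, so each pullback $\{T\in\mathcal{L}(\ell_1,Z):T^*\in\overline{\mathcal{I}_\gamma}\}$ is automatically a closed subideal of $\mathcal{L}(\ell_1,Z)$. The substantive point, which together with the rank argument I expect to be the main obstacle, is a duality lemma matching the Schreier--Rosenthal index of $T^*$ relative to the $\ell_1$-basis with the index of $T$ measured by failure of $\mathcal{S}_\zeta$-domination of the $Z$-basis; once this matching is available, the operators witnessing strictness on the $\mathcal{L}(\ell_p,\ell_\infty)$ side may be taken to be adjoints, so the strict inclusions transfer verbatim and $\mathcal{L}(\ell_1,Z)$ inherits an uncountable strictly increasing chain. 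The remaining verifications (the indecomposable-ordinal closure trick and the subideal axioms) are routine given Propositions \ref{property-list} and \ref{JS-Schreier-addition}.
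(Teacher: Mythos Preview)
Your scaffolding via additively indecomposable ordinals is essentially the same device the paper uses (it takes $\mathcal{J}_\xi=\overline{\bigcup_k\mathcal{JS}_{e,\xi k}}$, which amounts to the same closure-under-addition trick), and the duality map $T\mapsto T^*$ is also how the paper passes to $\mathcal{L}(\ell_1,Z)$. The gap lies in the strictness argument.

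Theorem \ref{corollary-19} is the \emph{boundedness} direction: every single $T\in\mathcal{WS}_{e,\omega_1}$ has some countable rank. It does not assert that the rank is unbounded on $\mathcal{L}(\ell_p,\ell_\infty)$, nor that $\mathcal{WS}_{e,\omega_1}(\ell_p,\ell_\infty)$ is $\Pi^1_1$-complete; indeed the latter statement is not even well-posed here since $\ell_\infty$ is nonseparable and the Polish-space framework of \cite{BF11} does not apply to this component. Your argument that ``if the chain stabilized then $\mathcal{WS}_{e,\omega_1}$ would be Borel, contradiction'' therefore rests on an unestablished (and, as stated, ill-formulated) complexity claim. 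The paper supplies the missing unboundedness by an explicit construction: for each $\zeta$ it builds, via the $p$-convexified Tsirelson space $T_\zeta^p$, a formal inclusion $j_\zeta:\ell_p\to T_\zeta^p$ which is \emph{not} in $\mathcal{JS}_{e,\zeta}$ (since the Tsirelson basis $\mathcal{S}_\zeta$-dominates $(e_n)$) but \emph{is} in $\mathcal{WS}_{e,\omega_1}$ (since $T_\zeta^p$ contains no copy of $\ell_p$, Proposition \ref{contains-lp}); Theorem \ref{corollary-19}, applied to the genuinely separable pair $(\ell_p,T_\zeta^p)$, then locates $j_\zeta$ in some $\mathcal{WS}_{e,\alpha}$ with $\alpha>\zeta$.

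This same construction also closes the second gap you flag yourself. You note that for the $\mathcal{L}(\ell_1,Z)$ chain one needs the strictness witnesses in $\mathcal{L}(\ell_p,\ell_\infty)$ to be adjoints, and you propose an unproved ``duality lemma'' to arrange this. No such lemma is available, and an abstract rank argument gives no control over \emph{which} operators witness non-stabilization (a generic $T:\ell_p\to\ell_\infty$ need not be weak*--weak* continuous). The paper sidesteps this entirely: it takes a quotient $Q_\zeta:\ell_1\twoheadrightarrow T_\zeta^{p*}$ and the predual $I_\zeta:T_\zeta^{p*}\to Z$ of $j_\zeta$, so that $A_\alpha:=I_\zeta Q_\zeta\in\mathcal{L}(\ell_1,Z)$ has $A_\alpha^*=Q_\zeta^*j_\zeta\in\mathcal{J}_\alpha\setminus\mathcal{J}_\xi$ by design. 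Thus the Tsirelson spaces are not decorative---they are precisely the ingredient that produces witnesses of arbitrarily high rank which are, by construction, adjoints.
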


\begin{proof}Denote by $e=(e_n)$ the canonical basis for $\ell_p$.  For a countable ordinal $1\leq\xi<\omega_1$, let us set
\[\mathcal{J}_\xi=\overline{\bigcup_{k=1}^\infty\mathcal{JS}_{e,\xi k}(\ell_p,\ell_\infty)}.\]
It is clear from Proposition \ref{property-list}\eqref{JS-inclusion},\eqref{JS-closed},\eqref{JS-multiplicative-ideal}, and Proposition \ref{JS-Schreier-addition}, that if $\zeta:=\sup_k\xi k$ then $\mathcal{J}_\xi$ is a closed subideal of $\mathcal{L}(\ell_p,\ell_\infty)$ contained in $\mathcal{JS}_{e,\zeta}(\ell_p,\ell_\infty)$.  Hence
\[\mathcal{J}_\xi^*=\left\{T\in\mathcal{L}(\ell_1,Z):T^*\in\mathcal{J}_\xi\right\}.\]
is a closed subideal of $\mathcal{L}(\ell_1,Z)$.

We claim that there is a countable ordinal $\alpha>\zeta$ and an operator $A_\alpha\in\mathcal{L}(\ell_1,Z)$ such that $A_\alpha^*\in\mathcal{J}_\alpha\setminus\mathcal{J}_\xi$.  For proof of the claim, let $T_\zeta^p$ denote the $p$-convexification of the Tsirelson space of order $\zeta$.  It is well-known that $T_\zeta^p$ is a reflexive space containing no copy of $\ell_p$, with a normalized basis $(t_n)$ which is dominated by $(e_n)$, and which $\mathcal{S}_\zeta$-dominates $(e_n)$ (cf., e.g., \cite[\S3, p88]{Wa14}).  Thus, there exists an operator $I_\zeta\in\mathcal{L}(T_\zeta^{p*},Z)$ whose dual is the continuous formal inclusion of $j_\zeta:\ell_p\to T_\zeta^p$ which maps $e_n\mapsto t_n$. Indeed, in the case $1<p<\infty$, due to reflexivity of both spaces we can simply set $I_\zeta = (j_\zeta)^*$. In case $p=1$, due to the weakness of the $c_0$ norm we can straightforwardly define $I_\zeta\in\mathcal{L}(T_\zeta^{1*},c_0)$ by the rule $I_\zeta t_n^*=f_n$, where $(f_n)$ is the canonical basis of $c_0$ and $(t_n^*)\subset T_\zeta^{1*}$ are the biorthogonal functionals to $(t_n)\subset T_\zeta^1$. 
 %In this case, since $(e_n)$ and $(t_n)$ are respectively biorthogonal to $(f_n)$ and $(t_n^*)$, it is easy to see that $I_\zeta^*\in\mathcal{L}(\ell_1,T_\zeta^1)$ is again the desired formal inclusion $I_\zeta^*e_n=t_n$.  
 Recall that every separable Banach space is isometrically isomorphic to a quotient of $\ell_1$ (cf., e.g., \cite[Corollary 2.3.2]{AK06}).  Thus, there exists a surjection $Q_\zeta:\ell_1\to T_\zeta^{p*}$.  Recall that embeddings and surjections are dual sorts of operators (cf., e.g., \cite[Lemma 1.30]{Ai04}) so that $Q_\zeta^*:T_\zeta^p\to\ell_\infty$ is an embedding.  Since $I_\zeta^*$ is not class $\mathcal{JS}_{e,\zeta}$, by Proposition \ref{JS-embedding} neither is $Q_\zeta^*I_\zeta^*$.  On the other hand, since $T_\zeta^p$ fails to contain a copy of $\ell_p$, by Propositions \ref{contains-lp} and \ref{JS-WS} we have $I_\zeta^*\in\mathcal{WS}_{e,\omega_1}(\ell_p,T_\zeta^p)$.  Now we can apply Theorem \ref{corollary-19} to find a countable ordinal $1\leq\alpha<\omega_1$ such that $I_\zeta^*\in\mathcal{WS}_{e,\alpha}(\ell_p,T_\zeta^p)$.  It then follows from Propositions \ref{property-list}\eqref{JS-multiplicative-ideal} and \ref{JS-WS} that $Q_\zeta^*I_\zeta^*\in\mathcal{JS}_{e,\alpha}(\ell_p,\ell_\infty)$.  Due to Proposition \ref{property-list}\eqref{JS-inclusion}, this forces $\alpha>\zeta$.  Letting $A_\alpha=I_\zeta Q_\zeta$ completes the proof of the claim.

Due to the claim above together with Proposition \ref{property-list}\eqref{JS-inclusion}, for any countable ordinal $1\leq\xi<\omega_1$ there exists a countable ordinal $\alpha>\xi$ such that $\mathcal{J}_\xi\subsetneq\mathcal{J}_\alpha$ and $\mathcal{J}_\xi^*\subsetneq\mathcal{J}_\alpha^*$.  The desired chains follow from the fact that if $(\xi_i)_{i\in I}$ is any countable chain of ordinals then $\sup_i\xi_i$ is again a countable ordinal.\end{proof}

We can really just deduce the main Theorem \ref{main} of this section as a corollary to the above, in light of the following elementary fact.

\begin{proposition}\label{complemented-injection}Let $X$ and $Y$ be Banach spaces, and let $Z$ be a Banach space containing complemented copies of $X$ and $Y$.  For each closed subideal $\mathcal{I}$ in $\mathcal{L}(X,Y)$, we define
\[\Psi(\mathcal{I}):=[\mathcal{G}_\mathcal{I}](Z),\]
the closed linear span in $\mathcal{L}(Z)$ of operators factoring through elements of $\mathcal{I}$.  Then $\Psi$ an order-preserving injection from the closed subideals of $\mathcal{L}(X,Y)$ into the closed ideals of $\mathcal{L}(Z)$.\end{proposition}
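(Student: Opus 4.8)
The plan is to set up the map $\Psi$ explicitly through the complementation data and then verify, in turn, that $\Psi(\mathcal{I})$ is a closed ideal in $\mathcal{L}(Z)$, that $\Psi$ is order-preserving, and finally that $\Psi$ is injective. First I would fix once and for all the structure maps witnessing the complemented copies: let $i_X\colon X\to Z$ and $i_Y\colon Y\to Z$ denote the inclusions, and let $p_X\colon Z\to X$ and $p_Y\colon Z\to Y$ denote the bounded projections, so that $p_Xi_X=\id_X$ and $p_Yi_Y=\id_Y$. For a closed subideal $\mathcal{I}\subseteq\mathcal{L}(X,Y)$, the set $\mathcal{G}_\mathcal{I}$ of operators on $Z$ factoring through elements of $\mathcal{I}$ consists of those $BTA$ with $A\in\mathcal{L}(Z,X)$, $T\in\mathcal{I}$, $B\in\mathcal{L}(Y,Z)$, and $\Psi(\mathcal{I})=[\mathcal{G}_\mathcal{I}](Z)$ is its closed linear span in $\mathcal{L}(Z)$.

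To see $\Psi(\mathcal{I})$ is a closed ideal, closedness and linearity hold by construction, so the point is the two-sided ideal property: for $S\in\Psi(\mathcal{I})$ and $R,R'\in\mathcal{L}(Z)$ one needs $RSR'\in\Psi(\mathcal{I})$. Since $\Psi(\mathcal{I})$ is the closed span, it suffices to check this on a generator $S=BTA$, where $R(BTA)R'=(RB)T(AR')$ is again of the required form because $RB\in\mathcal{L}(Y,Z)$ and $AR'\in\mathcal{L}(Z,X)$; closedness of multiplication by fixed operators then extends this to all of $\Psi(\mathcal{I})$. Order-preservation is immediate: if $\mathcal{I}\subseteq\mathcal{I}'$ then $\mathcal{G}_\mathcal{I}\subseteq\mathcal{G}_{\mathcal{I}'}$, hence $\Psi(\mathcal{I})\subseteq\Psi(\mathcal{I}')$.

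The main obstacle is injectivity, and this is where the complementation is essential. The natural strategy is to produce a one-sided inverse recovering $\mathcal{I}$ from $\Psi(\mathcal{I})$ via compression: define $\Phi(\mathcal{J})=\{p_YSti_X : S\in\mathcal{J}\}$, or rather $\Phi(\mathcal{J})=p_Y\mathcal{J}i_X=\{p_YSi_X:S\in\mathcal{J}\}$, and show $\Phi(\Psi(\mathcal{I}))=\mathcal{I}$. The inclusion $\mathcal{I}\subseteq\Phi(\Psi(\mathcal{I}))$ is clear, since for $T\in\mathcal{I}$ the operator $S:=i_YTp_X$ lies in $\mathcal{G}_\mathcal{I}\subseteq\Psi(\mathcal{I})$ and satisfies $p_YSi_X=p_Yi_YTp_Xi_X=T$. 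For the reverse inclusion one must argue that compressing any element of $\Psi(\mathcal{I})$ lands back in $\mathcal{I}$: on a generator $S=BTA$ we get $p_YSi_X=(p_YB)T(Ai_X)$ with $p_YB\in\mathcal{L}(Y)$ and $Ai_X\in\mathcal{L}(X)$, which lies in $\mathcal{I}$ precisely because $\mathcal{I}$ is a subideal; linearity and continuity of the maps $S\mapsto p_YSi_X$ together with closedness of $\mathcal{I}$ extend this from generators and their finite linear combinations to all of $\Psi(\mathcal{I})$. Thus $\Phi\circ\Psi=\id$ on closed subideals, so $\Psi$ is injective, completing the proof. The one subtlety to handle carefully is that passing from generators to the closed span must respect the compression map, which is why continuity of $S\mapsto p_YSi_X$ and norm-closedness of $\mathcal{I}$ are invoked at the end.
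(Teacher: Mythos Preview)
Your proposal is correct and follows essentially the same approach as the paper: both arguments lift $T\in\mathcal{I}$ to $i_YTp_X\in\Psi(\mathcal{I})$ and recover it by compressing via $S\mapsto p_YSi_X$, using the subideal property on generators $BTA$ and then closedness of $\mathcal{I}$ to pass through finite sums and limits. The only cosmetic difference is that you package the compression as a left inverse $\Phi$ with $\Phi\circ\Psi=\id$, whereas the paper phrases the same computation as the implication $\Psi(\mathcal{I})\subseteq\Psi(\mathcal{J})\Rightarrow\mathcal{I}\subseteq\mathcal{J}$; since your $\Phi$ is monotone, your formulation immediately yields theirs as well.
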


\begin{proof}This is just a minor adaptation of \cite[Proposition 3.9]{Wa15}.  Let $\mathcal{I}$ and $\mathcal{J}$ be closed subideals in $\mathcal{L}(X,Y)$.  Clearly, if $\mathcal{I}\subseteq\mathcal{J}$, then $\Psi(\mathcal{I})\subseteq\Psi(\mathcal{J})$.  Now let us suppose instead that $\Psi(\mathcal{I})\subseteq\Psi(\mathcal{J})$, and pick any $T\in\mathcal{I}$.  Let $P:Z\to\widehat{X}$ and $R:Z\to\widehat{Y}$ denote projections onto subspaces $\widehat{X}$ and $\widehat{Y}$, respectively, such that there exist isomorphisms $U:X\to\widehat{X}$ and $V:Y\to\widehat{Y}$.  Also, let $J:\widehat{X}\to Z$ and $Q:\widehat{Y}\to Z$ denote the corresponding embeddings, i.e. such that $PJ$ and $RQ$ are just identity operators acting on $\widehat{X}$ and $\widehat{Y}$, respectively.  Then $QVTU^{-1}P\in\Psi(\mathcal{I})\subseteq\Psi(\mathcal{J})$, and so we can find a sequence of finite sums satisfying
\[\lim_{n\to\infty}\sum_{j=1}^{m_n}B_{n,j}T_{n,j}A_{n,j}=QVTU^{-1}P,\]
where $A_{n,j}\in\mathcal{L}(Z,X)$, $B_{n,j}\in\mathcal{L}(Y,Z)$, and $T_{n,j}\in\mathcal{J}$ for all $n$ and $j$.  Let us set
\[S_n:=\sum_{j=1}^{m_n}V^{-1}RB_{n,j}T_{n,j}A_{n,j}JU\in\mathcal{J}.\]
Then $S_n\to V^{-1}RQVTU^{-1}PJU=T$, and since $\mathcal{J}$ is closed we get $T\in\mathcal{J}$.  This shows that $\mathcal{I}\subseteq\mathcal{J}$ as desired.\end{proof}

\end{document}